\newcommand{\arxiv}[1]{\href{http://arxiv.org/abs/#1}{\texttt{arXiv:#1}}}
\newcommand{\seqnum}[1]{\href{http://oeis.org/#1}{#1}}
\newcommand{\N}{{\mathbb N}}
\newcommand{\bone}{ {\mathbf 1} }
\theoremstyle{plain}
\newtheorem{theorem}{Theorem}
\newtheorem{corollary}[theorem]{Corollary}
\newtheorem{proposition}[theorem]{Proposition}
\theoremstyle{definition}
\newtheorem{definition}[theorem]{Definition}
\theoremstyle{remark}
\newcommand{\stirlingsubset}[2]{\genfrac{\{}{\}}{0pt}{}{#1}{#2}}
\newcommand{\associatedstirlingsubset}[2]%
      {\left\{\!\! \stirlingsubset{#1}{#2} \!\! \right\}}
\newcommand{\assocstirlingsubset}[3]%
      {{\genfrac{\{}{\}}{0pt}{}{#1}{#2}}_{\! \ge #3}}
\newcommand{\euler}[2]{\genfrac{\langle}{\rangle}{0pt}{}{#1}{#2}}
\newcommand{\eulergen}[3]{{\genfrac{\langle}{\rangle}{0pt}{}{#1}{#2}}_{\! #3}}
\newcommand{\eulersecond}[2]{\left\langle\!\! \euler{#1}{#2} \!\!\right\rangle}
\newcommand{\eulersecondgen}[3]%
     {{\left\langle\!\! \euler{#1}{#2} \!\!\right\rangle}_{\! #3}}
\newcommand{\binomvert}[2]{\genfrac{\vert}{\vert}{0pt}{}{#1}{#2}}
\newcommand{\nueulergen}[4]%
{{\genfrac{\langle}{\rangle}{0pt}{}{#1}{#2}}^{\! #3}_{\! #4}}
\title{\bf Generalized Stirling permutations and forests: Higher-order
       Eulerian and Ward numbers}
\author{
J. Fernando Barbero G.\thanks{Partially supported by 
Spanish MINECO grants FIS2012-34379 and FIS2014-57387-C3-3-P.}\\
\small Instituto de Estructura de la Materia, CSIC\\[-0.8ex]
\small Serrano 123, 28006 Madrid, SPAIN\\[2mm]
\small Grupo de Teor\'{\i}as de Campos y F\'{\i}sica Estad\'{\i}stica\\[-0.8ex]
\small Instituto Gregorio Mill\'an, Universidad Carlos III de Madrid \\[-0.8ex]
\small Unidad Asociada al Instituto de Estructura de la Materia, CSIC,
       Madrid, SPAIN \\
\small\tt fbarbero@iem.cfmac.csic.es\\
\and
Jes\'us Salas$\mbox{}^{*,}$\thanks{Partially supported by Spanish MINECO grant 
MTM2011-24097, and by U.S.\ National Science Foundation grant PHY--0424082.}
\qquad  Eduardo J.S. Villase\~nor$\mbox{}^*$\\  
\small Grupo de Modelizaci\'on, Simulaci\'on Num\'erica 
       y Matem\'atica Industrial \\[-0.8ex]
\small Universidad Carlos III de Madrid \\[-0.8ex]
\small Avda.\  de la Universidad, 30\\[-0.8ex]
\small 28911 Legan\'es, SPAIN\\[2mm]
\small Grupo de Teor\'{\i}as de Campos y F\'{\i}sica Estad\'{\i}stica\\[-0.8ex]
\small Instituto Gregorio Mill\'an, Universidad Carlos III de Madrid\\[-0.8ex]
\small Unidad Asociada al Instituto de Estructura de la Materia, CSIC,
       Madrid, SPAIN \\ 
\small\tt \{jsalas,ejsanche\}@math.uc3m.es
}
\begin{document}


\date{\dateline{Nov 3, 2014}{XX}\\
\small Mathematics Subject Classifications: 05A05, 05A15, 05C30}

\maketitle
\thispagestyle{empty}   

\begin{abstract}
We define a new family of generalized Stirling permutations that can be
interpreted in terms of ordered trees and forests. We prove that the number of
generalized Stirling permutations with a fixed number of ascents is given by 
a natural three-parameter generalization of the well-known Eulerian numbers. 
We give the generating function for this new class of numbers and, in the 
simplest cases, we find closed formulas for them and the corresponding row 
polynomials. By using a non-trivial involution our generalized Eulerian 
numbers can be mapped onto a family of generalized Ward numbers, forming a 
Riordan inverse pair, for which we also provide a combinatorial interpretation. 

\bigskip\noindent \textbf{Keywords:} generalized Stirling permutations;
increasing trees and forests; generalized Eulerian numbers; 
generalized Ward numbers
\end{abstract}

%
%
\section{Introduction} \label{sec.introduction}

Stirling permutations of order $n$
are permutations of the multiset $\{1^2,2^2,\ldots,n^2\}$
such that, for each $1\le r\le n$, the elements appearing between two
occurrences of $r$ are at least $r$ \cite{Gessel_78}. Given a Stirling
permutation $\rho=r_1r_2\ldots r_{2n}$\,, the index $i$ will be called
an ascent of $\rho$ if $r_i<r_{i+1}$. The number of Stirling permutations
of order $n$ with exactly $k$ ascents is given by second-order Eulerian
numbers $B_{n,k}$ \cite{Gessel_78}. Second order Eulerian numbers are
closely related  to Ward numbers $W_{n,k}$
\cite{Ward_34}, \cite[entry \seqnum{A134991}]{Sloane}. They form an
\textit{inverse pair} in the sense of Riordan \cite{Riordan_68}
(see\cite{Smiley_00}, \cite[entry \seqnum{A008517}]{Sloane}):
\begin{subequations}
\label{Ward_SecondOrderEuler}
\begin{align}
\label{Ward_vs_SecondOrderEuler}
W_{n,k} &\;=\;\sum_{j=0}^k\binom{n-j}{n-k} B_{n,j}\,, \\
B_{n,k} &\;=\;\sum_{j=0}^k(-1)^{k-j}\binom{n-j}{k-j} W_{n,j}\,.
\label{SecondOrderEuler_vs_Ward}
\end{align}
\end{subequations}
We will use Eq.~\eqref{Ward_vs_SecondOrderEuler} to provide a new
combinatorial interpretation of Ward numbers in terms of Stirling permutations.

Stirling permutations and Eulerian numbers have been generalized to
multisets of the form $\{1^\nu,2^\nu,\ldots,n^\nu\}$ with $\nu\in\N$ by Gessel
\cite{Gessel_78a} (as cited by Park \cite{Park_94a}) 
and Park \cite{Park_94a,Park_94b}. Park \cite{Park_94c} related the Stirling 
permutations of these multisets to some generalized Stirling numbers.
(See also \cite[Theorem~1]{Janson_11} and \cite[Theorem~2.1]{Kuba_11}.)
Brenti considered Stirling permutations of the more general multiset
$\{1^{\nu_1},2^{\nu_2},\ldots,n^{\nu_n}\}$ with $\nu_i\in\N$ ($1\le i\le n$) 
in the context of Hilbert polynomials \cite{Brenti_98}; and in relation to 
increasing trees by Kuba and Panholzer \cite{Kuba_11}. The particular case
$\{1^\nu,2^{\nu+2},\ldots,n^{\nu+2}\}$ was also studied by
Janson {\em et al.}\/ \cite{Janson_11}. Stirling permutations 
also appear in the framework of context-free grammars \cite{Chen_93} (see 
Ref.~\cite{Chen_12} for more recent literature). 

The purpose of the paper is to introduce and study natural generalizations of 
the Stirling permutations considered by Gessel and Stanley, Park and other 
authors \cite{Gessel_78,Park_94a}.
We will find bijections between these generalized Stirling permutations
and ordered trees \cite{Park_94a,Janson_11} and forests, providing 
a graph-theoretic interpretation of these objects.  
The combinatorial numbers that count
such Stirling permutations with a fixed number of ascents
are natural generalizations of the Eulerian numbers.
Some particular numbers of this class have been considered
in other contexts \cite{Carlitz_60, Dillon_68, Carlitz_73, Kuba_11} but,
to our knowledge, most of them have not appeared before in the literature.
There are indeed other generalizations of the Eulerian numbers that do not
fall in the above class: e.g., the $r$--Eulerian numbers
\cite{Riordan_58,Foata_70,Magagnosc_80,Bona_12}, or the numbers
$A(r,s\mid \alpha,\beta)$ due to Carlitz and Scoville \cite{Carlitz_74}.

In all our cases, these Eulerian numbers satisfy two-parameter linear
recurrence relations that can be studied in an efficient way by using
generating function techniques \cite{BSV}. With the help of these methods, we
define a family of generalized Ward numbers, and get closed expressions for
them in terms of generalized Eulerian numbers, in the form of inverse pairs
similar to Eqs.~\eqref{Ward_SecondOrderEuler}. These relations provide a
simple combinatorial interpretation for the generalized Ward numbers
in the present context.

%
%
\section{Generalized Stirling permutations}\label{sec.stirling.permutations}

It is useful for our purposes to introduce several definitions based on
the $\nu$--Stirling permutations of order $n$ discussed by Park
\cite{Park_94a}:

\begin{definition} \label{def_new_stirling_permutations}
Let $\nu$ be a positive integer and $X=\{x_1<x_2<\cdots<x_n\}$ be a
totally ordered set of cardinality $n$.
A $(\nu,X)$--Stirling permutation is a permutation of the multiset
$\{x_1^\nu,x_2^\nu,\ldots,x_n^\nu\}$ such that, for each $1\le j\le n$,
the elements occurring between two occurrences of $x_j$ are, at least, $x_j$.
\end{definition}

\medskip

\noindent
{\bf Remarks.} 1. This definition implies that the elements occurring between
two consecutive occurrences of $x_j$ are greater than  $x_j$.
As a consequence of this, the $\nu$ occurrences of $x_n$ have to go together.

2. If $X=[n]$, then the $(\nu,[n])$--Stirling permutations are equivalent
to the canonical $\nu$--Stirling permutations of order $n$. In
Definition~\ref{def_new_stirling_permutations_st}, the $X$ will
correspond to different subsets of $[n]$.

3. If $X=\emptyset$ the unique
$(\nu,\emptyset)$--Stirling permutation is the empty permutation.

\medskip

\begin{definition} \label{def_new_stirling_permutations_t}
Let $\nu,t$ be positive integers, $X=\{x_1<x_2<\cdots<x_n\}$ be a
totally ordered set of cardinality $n$, and consider
$x_0=0 < x_1 < x_2 < \cdots < x_n$.
A $(\nu,t,X)$--Stirling permutation is a permutation of
the multiset  $\{0^t, x_1^\nu,x_2^\nu,\ldots,x_n^\nu\}$ such that for each
$0\le j\le n$ the elements occurring between two occurrences of
$x_j$ are at least $x_j$.
\end{definition}

\medskip

\noindent
{\bf Remarks.} 1. If $t=0$, a $(\nu,0,X)$--Stirling permutation is just a
$(\nu,X)$--Stirling permutation.

2. If $X=\emptyset$ the unique
$(\nu,t,\emptyset)$--Stirling permutation is the permutation $0^t$.

3. The number of $(\nu,t,X)$--Stirling permutations is
$\prod\limits_{k=0}^{|X|-1} (k\nu + t +1)$.

\medskip

In order to count generalized Stirling permutations with a fixed number of
ascents, we introduce a
three-parameter generalization of the standard Eulerian numbers,
that we will refer to as the
\emph{$\nu$-order $(s,t)$-Eulerian numbers}:

\begin{definition} \label{def_gen_Eulerian}
Let $\nu,s\geq 1$ and $t\geq 0$ be integers.
The $\nu$-order $(s,t)$-Eulerian numbers $\nueulergen{n}{k}{(\nu)}{(s,t)}$
are defined as those satisfying the recurrence
\begin{equation}
\nueulergen{n}{k}{(\nu)}{(s,t)} \;=\;
                     (k+s)\, \nueulergen{n-1}{k}{(\nu)}{(s,t)} +
(\nu n - k  + t + 1 -\nu) \, \nueulergen{n-1}{k-1}{(\nu)}{(s,t)}
                           + \delta_{k0}\delta_{n0} \,,
\label{def_recurrence_stnuEulerianOK}
\end{equation} 
with the additional conditions $\nueulergen{n}{k}{(\nu)}{(s,t)}=0$ if $n<0$ or
$k<0$.
\end{definition}

\medskip

\noindent
{\bf Remark.} The values of $\nu,s,t$ do not
have to be integers as $\nueulergen{n}{k}{(\nu)}{(s,t)}$ is obviously a polynomial in these three
parameters. However, we have restricted their ranges to make contact with their
combinatorial interpretation.

\medskip

\begin{proposition} \label{prop.Eulerian_nu_t}
The number of $(\nu,t,[n])$--Stirling permutations with $k$ ascents is equal to
$\nueulergen{n}{k}{(\nu)}{(1,t)}$.
\end{proposition}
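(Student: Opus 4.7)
Let $a_{n,k}$ denote the number of $(\nu,t,[n])$--Stirling permutations with $k$ ascents. The plan is to prove by induction on $n$ that $a_{n,k}$ satisfies the recurrence~\reff{def_recurrence_stnuEulerianOK} at $s=1$ together with the same Kronecker term. For $n=0$, Remark~2 after Definition~\ref{def_new_stirling_permutations_t} identifies the unique $(\nu,t,\emptyset)$--Stirling permutation as $0^t$, which has no ascent, so $a_{0,k}=\delta_{k,0}$; this matches $\nueulergen{0}{k}{(\nu)}{(1,t)}$ and settles the base case.

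For the inductive step I would set up an insertion bijection. Applying Remark~1 following Definition~\ref{def_new_stirling_permutations} to the augmented totally ordered alphabet $\{0<1<\cdots<n\}$, the $\nu$ copies of $n$ in any $(\nu,t,[n])$--Stirling permutation must form a contiguous block, whose removal leaves a $(\nu,t,[n-1])$--Stirling permutation $\sigma$ of length $L = \nu(n-1)+t$. Conversely, since $n$ strictly exceeds every entry of $\sigma$, inserting the block $n^\nu$ at any of the $L+1$ positions of $\sigma$ produces a valid $(\nu,t,[n])$--Stirling permutation (the condition is trivially preserved between the new copies of $n$ and is unaffected between any two copies of some $j<n$). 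Hence the set of $(\nu,t,[n])$--Stirling permutations is in bijection with the pairs $(\sigma,i)$, $i\in\{0,1,\ldots,L\}$.

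The last step is to track how the ascent count transforms under the insertion. The block $n^\nu$ is internally flat; its left boundary pair $(\sigma_i,n)$ (present when $i\geq 1$) is always an ascent; its right boundary pair $(n,\sigma_{i+1})$ (present when $i\leq L-1$) is always a descent; and the original pair $(\sigma_i,\sigma_{i+1})$ is destroyed exactly when $1\leq i\leq L-1$. Combining these four effects, the net change in ascents is $0$ at position $i=0$ and at every interior position originally carrying an ascent, and $+1$ at position $i=L$ and at every non-ascent interior position. Thus each $\sigma$ with $k$ ascents produces $k+1$ extensions with $k$ ascents and $L-k$ extensions with $k+1$ ascents, so that
\[
a_{n,k} \;=\; (k+1)\,a_{n-1,k} \;+\; \bigl(\nu(n-1)+t-k+1\bigr)\,a_{n-1,k-1}\,,
\]
which coincides with~\reff{def_recurrence_stnuEulerianOK} at $s=1$ once one rewrites $\nu(n-1)+t-k+1 = \nu n - k + t + 1 - \nu$. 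The delicate point is the asymmetric contribution of the two extreme positions $i=0$ and $i=L$, and in particular the degenerate case $L=0$ (namely $n=1$, $t=0$) in which they coincide and must be counted only once; the $t$ zeros at the head of the alphabet introduce no further subtlety because $0<j$ for every $j\geq 1$.
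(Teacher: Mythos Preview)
Your argument is correct and follows the same insertion-of-$n^\nu$ induction as the paper's proof, with the identical split into the $k+1$ ``ascent-preserving'' and $\nu(n-1)+t-(k-1)$ ``ascent-creating'' positions. The only difference is expository: you spell out the left/right boundary analysis and the degenerate $L=0$ case more carefully than the paper does, but the method and the resulting recurrence are the same.
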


\begin{proof}
This is just a generalization of the proof of Eq.~(6.1) in \cite{Dillon_68}.
Let $J_{\nu,t}(n,k)$ be the number of $(\nu,t,[n])$--Stirling permutations
with $k$ ascents. We want to show that
$J_{\nu,t}(n,k)=\nueulergen{n}{k}{(\nu)}{(1,t)}$ by induction on $n$.

The case $n=0$ is trivial:
$J_{\nu,t}(0,k)=\delta_{0,k}=\nueulergen{0}{k}{(\nu)}{(1,t)}$, as there is
a unique permutation of this type (the empty permutation).

Let us assume that $J_{\nu,t}(n-1,k)=\nueulergen{n-1}{k}{(\nu)}{(1,t)}$ for
all $0\le k\le n-1$. We have to insert now the block $n^\nu$. This will
leave the number of ascents unchanged, or increase it
by one unit. We have then only two choices: (1) start from a
$(\nu,t,[n-1])$--Stirling permutation with $k$ ascents, or (2) start from
a $(\nu,t,[n-1])$--Stirling permutation with $k-1$ ascents.
In the first case, we can place the block $n^\nu$ at the beginning of the
permutation or insert it at any of the $k$ ascents.
In the second case, we can insert the block $n^\nu$ at any of the
$\nu(n-1)+t-(k-1)$ non-ascent places.
Hence
$$
J_{\nu,t}(n,k) \;=\; (k+1) J_{\nu,t}(n-1,k) +
                     (\nu n -k +t+1-\nu) J_{\nu,t}(n-1,k-1) \,.
$$
This equation completes the proof.
\end{proof}

\medskip

\noindent
{\bf Remarks.} 1. If $(s,t)=(1,0)$, these numbers reduce to the ordinary
Eulerian numbers for $\nu=1$, to the second-order Eulerian numbers for
$\nu=2$ \cite{Gessel_78}, and to the third-order Eulerian numbers for
$\nu=3$ \cite[entry \seqnum{A219512}]{Sloane}.

2. If $\nu=2$ and $(s,t)=(1,t)$, these numbers correspond to the
generalization by Carlitz \cite{Carlitz_60,Carlitz_73} and
Dillon and Roselle \cite{Dillon_68}.

\medskip

\begin{definition} \label{def_new_stirling_permutations_st}
Let us fix integers $\nu\geq 1$ and  $t\geq 0$, and a generalized ordered
partition $\bm{t}=(t_1,\ldots, t_s)$ of $t$ with $s\geq 1$ parts
(and $t_i\geq0$).
A $(\nu,\bm{t},n)$--Stirling permutation is a sequence
$\bm{\rho}=(\rho_1,\rho_2,\ldots,\rho_s)$, of length $s$, such that each entry
$\rho_i$ is a $(\nu,t_i,X_i)$--Stirling permutation for some generalized
ordered partition  $(X_1,X_2,\ldots,X_s)$  of $[n]$ (where we allow that some
of the $X_i$ are the empty set).
\end{definition}

\medskip

\noindent
{\bf Remarks.} 1. If $\bm{t}=(t)$ (i.e., $s=1$),
the $(\nu,\bm{t},n)$--Stirling permutations
reduce to the $(\nu,t,n)$--Stirling permutations.

2. If $n=0$, there is a single $(\nu,\bm{t},0)$--Stirling permutation:
$(0^{t_1},0^{t_2},\ldots,0^{t_s})$, where in the cases with $t_i=0$ we
have an empty entry.

\medskip

\begin{theorem} \label{theo.Eulerian_nu_st}
The number of $(\nu,\bm{t},n)$--Stirling permutations with $k$
ascents is equal to $\nueulergen{n}{k}{(\nu)}{(s,t)}$.
\end{theorem}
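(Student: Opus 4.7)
The plan is to generalize the argument of Proposition~\ref{prop.Eulerian_nu_t} by induction on $n$. Let $K_{\nu,\bm{t}}(n,k)$ denote the number of $(\nu,\bm{t},n)$--Stirling permutations with $k$ ascents, where ascents are counted inside each component $\rho_i$ (the commas separating components are not positions). The base case $n=0$ is immediate: by Remark 2 after Definition~\ref{def_new_stirling_permutations_st}, the unique such permutation is $(0^{t_1},\ldots,0^{t_s})$, which has no ascents, so $K_{\nu,\bm{t}}(0,k)=\delta_{k0}$; this agrees with $\nueulergen{0}{k}{(\nu)}{(s,t)}$ as produced by~\eqref{def_recurrence_stnuEulerianOK} at $n=0$.

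For the inductive step, the key observation is that because $n$ is the largest element, all $\nu$ copies of $n$ must be consecutive in any $(\nu,\bm{t},n)$--Stirling permutation, so the block $n^{\nu}$ lies entirely within a single component $\rho_i$. Thus every $(\nu,\bm{t},n)$--Stirling permutation arises uniquely by inserting $n^{\nu}$ at one position of one component of some $(\nu,\bm{t},n-1)$--Stirling permutation. The next step is to classify insertion positions by their effect on the ascent count via a direct case analysis (start of $\rho_i$, interior slot between $r_j$ and $r_{j+1}$, end of $\rho_i$), using that $n$ exceeds every symbol of $\rho_i$. This shows that in a component $\rho_i$ of length $L_i=\nu|X_i|+t_i$ carrying $k_i$ ascents, exactly $k_i+1$ of the $L_i+1$ insertion slots preserve the ascent count (the start, plus each existing interior ascent), while the remaining $L_i-k_i$ slots increase it by one (the interior non-ascents, plus the end).

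Summing over the $s$ components, if $k=\sum_i k_i$ is the total ascent count of the source permutation, then it contributes $\sum_i(k_i+1)=k+s$ slots yielding $k$ ascents and $\sum_i(L_i-k_i)=\nu(n-1)+t-k$ slots yielding $k+1$ ascents. This gives
\begin{equation*}
K_{\nu,\bm{t}}(n,k) \;=\; (k+s)\,K_{\nu,\bm{t}}(n-1,k) \,+\, (\nu n-k+t+1-\nu)\,K_{\nu,\bm{t}}(n-1,k-1),
\end{equation*}
which matches~\eqref{def_recurrence_stnuEulerianOK}. Together with the base case, this identifies $K_{\nu,\bm{t}}(n,k)$ with $\nueulergen{n}{k}{(\nu)}{(s,t)}$.

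The main obstacle is the bookkeeping for the insertion positions, particularly checking the edge cases when $\rho_i$ is the empty permutation or when $X_i=\emptyset$ and $\rho_i=0^{t_i}$, and verifying that grouping the ``component-start'' position with the interior ascents indeed gives $k_i+1$. This is precisely where the factor $s$ in $(k+s)$ enters, in contrast with the $1$ of Proposition~\ref{prop.Eulerian_nu_t}: it is the contribution of the $s$ distinct component-beginnings. A pleasant byproduct is that $K_{\nu,\bm{t}}(n,k)$ depends on $\bm{t}$ only through its length $s$ and its sum $t$, which is exactly the content of the theorem.
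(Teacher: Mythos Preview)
Your argument is correct and follows essentially the same route as the paper: induction on $n$, with the inductive step carried out by inserting the block $n^{\nu}$ into a $(\nu,\bm{t},n-1)$--Stirling permutation and counting how many insertion slots preserve versus increase the ascent count. Your per-component bookkeeping (computing $L_i=\nu|X_i|+t_i$, splitting the $L_i+1$ slots into $k_i+1$ ascent-preserving and $L_i-k_i$ ascent-creating, then summing) is a slightly more explicit version of what the paper states in one line by appeal to Proposition~\ref{prop.Eulerian_nu_t}.
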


\begin{proof}
Let $J_{\nu,\bm{t}}(n,k)$ be the number of
$(\nu,\bm{t},n)$--Stirling permutations
with $k$ ascents. We want to show that
$J_{\nu,\bm{t}}(n,k)=\nueulergen{n}{k}{(\nu)}{(s,t)}$ by induction
on $n$.

The case $n=0$ is trivial:
$J_{\nu,\bm{t}}(0,k)=\delta_{0,k}=\nueulergen{0}{k}{(\nu)}{(s,t)}$,
as there is a unique permutation of this type:
$(0^{t_1},0^{t_2},\ldots,0^{t_s})$.

Let us assume that
$J_{\nu,\bm{t}}(n-1,k)=\nueulergen{n-1}{k}{(\nu)}{(s,t)}$ for
all $0\le k\le n-1$. Then, as explained in the proof of
Proposition~\ref{prop.Eulerian_nu_t}, we have two choices to insert the
block $n^\nu$ in a  $(\nu,\bm{t},n-1)$--Stirling permutation with $k$
ascents:
(1) start from a  $(\nu,\bm{t},n-1)$--Stirling permutation
with $k$ ascents and
insert the block at the beginning of the $s$ entries or at any of the $k$
ascents; or (2) start from a $(\nu,\bm{t},n-1)$--Stirling permutation
with $k-1$ ascents, and insert the block at any of the
$\nu(n-1)+t-(k-1)$ non-ascent places. Then,
$$
J_{\nu,\bm{t}}(n,k) \;=\; (k+s) J_{\nu,\bm{t}}(n-1,k) +
                     (\nu n -k+ t+1-\nu) J_{\nu,\bm{t}}(n-1,k-1) \,.
$$
This completes the proof.
\end{proof}

\medskip

\noindent
{\bf Remark.} The number of $(\nu,\bm{t},n)$--Stirling permutations with
$k$ ascents  does depend on $\bm{t}$ but only through $t$ and $s$. This is also
true for the number of $(\nu,\bm{t},n)$--Stirling permutations that
is  given by
\begin{equation}
\prod\limits_{k=0}^{n-1} (k\nu + t+s)\,.
\label{eq.number_nu_s_t_n}
\end{equation}

%
%
\section{Increasing trees and forests} \label{sec.increasing.trees}

Gessel \cite{Gessel_78a}, Park \cite{Park_94a}, and Janson, Kuba and 
Panholzer \cite{Janson_11} discussed the bijection
between $\nu$--Stirling permutations and the class of increasing trees.
In this section we discuss generalizations of these results to the class of
$(\nu,t,[n])$--Stirling permutations introduced above. These latter ones are 
a particular case of the Stirling permutations of the multiset 
$\{1^{\nu_1},2^{\nu_2},\ldots,n^{\nu_n}\}$ discussed by Kuba and Panholzer 
\cite{Kuba_11}. 
For the $(\nu,\bm{t},n)$--Stirling permutations, we introduce a similar
construction in terms of forests.

\begin{definition} \label{def increasing X_tree}
Let $X=\{x_1<\cdots<x_n\}$ be a totally ordered set.
An increasing $X$--tree is a
rooted tree with the internal vertices labelled by the elements of $X$
in such a way that the node labelled $x_1$ is distinguished as the root
and such that, for each $2 \leq i \leq  n$, the labels of the nodes in
the unique path from the root
to the node labelled $x_i$ form an increasing sequence. A generalized increasing
$X$--tree  is an increasing $X_0$--tree with $|X|+1$ internal vertices labelled by the elements of the set
$X_0=\{x_0=0<x_1< x_2< \cdots <x_n\}$.
\end{definition}

\medskip

\noindent{\bf Remark.} The family of generalized increasing $X$--trees is
bijective with the family of increasing $[|X|+1]$--trees.

\medskip

\begin{definition} \label{def d_ary increasing X_trees}
For an integer $d \geq 2$, $d$-ary increasing $X$--trees are increasing
$X$--trees where each internal node has $d$ labelled positions for children.
Equivalently,
for integers $d\geq 2$, $d_0\geq 1$, $(d,d_0)$-ary increasing $X$--trees are
generalized increasing $X$--trees where the root $x_0=0$ has $d_0$ labelled
positions for children, and any non-root internal node $x_i$ ($1\leq i\leq n$)
has $d$ labelled positions for children.
\end{definition}

\medskip

\noindent
{\bf Remarks.} 1. A $d$-ary increasing $X$--tree has $d|X|$ edges,
$|X|$ internal nodes with  outdegree equal to $d$, and $(d-1)|X|+1$ external
nodes.

2. A $(d,d_0)$-ary increasing $X$--tree has $d|X|+d_0$ edges, a root with
outdegree equal to $d_0$, $|X|$ internal nodes with outdegree equal to $d$,
and $(d-1)|X|+d_0$ external nodes.

3. The family of $(d,1)$-ary increasing $X$--trees is bijective with the $d$-ary
increasing $[|X|]$--trees. The family of $(d,d)$-ary increasing $X$--trees is
bijective with the $d$-ary increasing $[|X|+1]$--trees.

\medskip

The following theorem relates the family of $(\nu+1,t+1)$--ary 
increasing $[n]$--trees and the $(\nu,t,[n])$--Stirling permutations. 
The authors independently derived this result, and later discovered 
that this result was already proved in Ref.~\cite[Theorem~2.1]{Kuba_11}. 
See also \cite[Theorem~1]{Janson_11} for a detailed proof of a 
related statement.

\begin{theorem} \label{theo.nu+1.t+1_ary increasing}
Let $\nu\geq 1$, $t\geq 0$ be integers. The family of $(\nu+1,t+1)$--ary
increasing $[n]$--trees is in natural bijection with
$(\nu,t,[n])$--Stirling permutations.
\end{theorem}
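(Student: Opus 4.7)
\smallskip\noindent\textbf{Proof proposal.}
My plan is to construct an explicit recursive bijection $\Phi$ between $(\nu+1,t+1)$-ary increasing $[n]$-trees and $(\nu,t,[n])$-Stirling permutations, obtained by reading off the labels during a left-to-right depth-first traversal, and to verify its correctness inductively. Given such a tree $T$, let $v$ be its root and $S_1,\ldots,S_d$ be the subtrees (possibly empty) hanging from the $d$ labelled child positions of $v$, where $d=t+1$ if $v$ is the overall root $0$ and $d=\nu+1$ otherwise. Define recursively
\[
\Phi(T) \;=\; \Phi(S_1)\,\ell(v)\,\Phi(S_2)\,\ell(v)\,\cdots\,\ell(v)\,\Phi(S_d),
\]
where $\ell(v)$ is the label of $v$ and $\Phi$ of an empty subtree is the empty word. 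A direct count shows that $\Phi(T)$ is a word of length $\nu n+t$ with exactly $t$ copies of $0$ and $\nu$ copies of each $j\in[n]$, matching the multiset of Definition~\ref{def_new_stirling_permutations_t}.

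The Stirling property of $\Phi(T)$ is immediate from the increasing-tree condition: the occurrences of a label $j$ in $\Phi(T)$ arise exclusively from emissions at the unique node carrying that label, and between any two consecutive such occurrences one reads the image of a single subtree of that node, whose labels all strictly exceed $j$. Hence $\Phi(T)$ is a genuine $(\nu,t,[n])$-Stirling permutation.

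For bijectivity I would argue by induction on $n$. The base case $n=0$ is trivial: the unique tree (root with $t+1$ empty slots) maps to $0^t$. For the inductive step, removing node $n$ from the tree and removing the block $n^\nu$ from the permutation both reduce to the $n-1$ case, with exactly $\nu(n-1)+t+1$ ways to reinsert in each setting, matching the product formula~\eqref{eq.number_nu_s_t_n}. The crux is order-preservation: the left-to-right order on the external slots of the reduced tree $T'$ must match the natural order of insertion gaps in $\Phi(T')$, so that grafting node $n$ into the $i$th external slot of $T'$ yields exactly the permutation obtained by inserting $n^\nu$ into the $i$th gap of $\Phi(T')$.

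This order-preservation is the main obstacle, but it reduces to a straightforward unwinding of the recursive definition of $\Phi$. An equivalent alternative is to exhibit the inverse $\Psi$ directly: the Stirling property guarantees that the occurrences of the smallest label in any sub-word partition it into factors which are themselves generalized Stirling permutations on disjoint label subsets; these factors become the children subtrees of the minimal node, and the recursion terminates at the leaves of the tree.
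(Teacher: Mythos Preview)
Your proposal is correct and describes the same classical Gessel--Park bijection as the paper: your map $\Phi$ (depth-first traversal of the tree) is precisely the inverse of the paper's map $T(\rho)$ (recursive factoring of the word at its minimal letter), and indeed your final paragraph sketches exactly the construction the paper presents. The only minor difference is that your primary bijectivity argument proceeds by induction on $n$ via removal of the maximal label, whereas the paper goes directly through the minimal-letter factoring; both are standard and yield the same result.
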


\begin{proof}
Our proof is a generalization of Gessel's theorem
(see \cite{Park_94a}) that relies on the argument presented in
\cite{Stanley_86} for ordinary permutations.
Let $\rho$ be any word on the alphabet $\{x_0<x_1<\cdots<x_n\}$ with possible
repeated letters. Let us define a planar tree $T(\rho)$ as follows:
If $\rho=\emptyset$, then $T(\rho)=\emptyset$; if
$\rho\neq\emptyset$, then $\rho$ can be factorized uniquely in the form
$\rho=\rho_1 i\rho_2 i\cdots i \rho_{\nu_i+1}$ where $i$ is the least element
(letter) of $\rho$ and $\nu_i$ its multiplicity. Let $i$ be the root of
$T(\rho)$ and $T(\rho_1)$, $T(\rho_2)$,\dots,
$T(\rho_{\nu_i+1})$ the subtrees (from left to right) obtained by
removing $i$. This yields an inductive definition of $T(\rho)$.
Notice that the outdegree of an internal vertex $i$ is $\nu_i+1$. Notice also
that when $\rho$ corresponds to a generalized Stirling permutation, if $j$
is a letter of $\rho_k$, then $j$ does not belong to any $\rho_l$ for $l\neq k$.
\end{proof}

\medskip

\noindent
{\bf Remark.} See Figure \ref{Fig1} for some simple examples of the Stirling
permutations and their associated trees. 

%
%
\begin{figure}[htb]
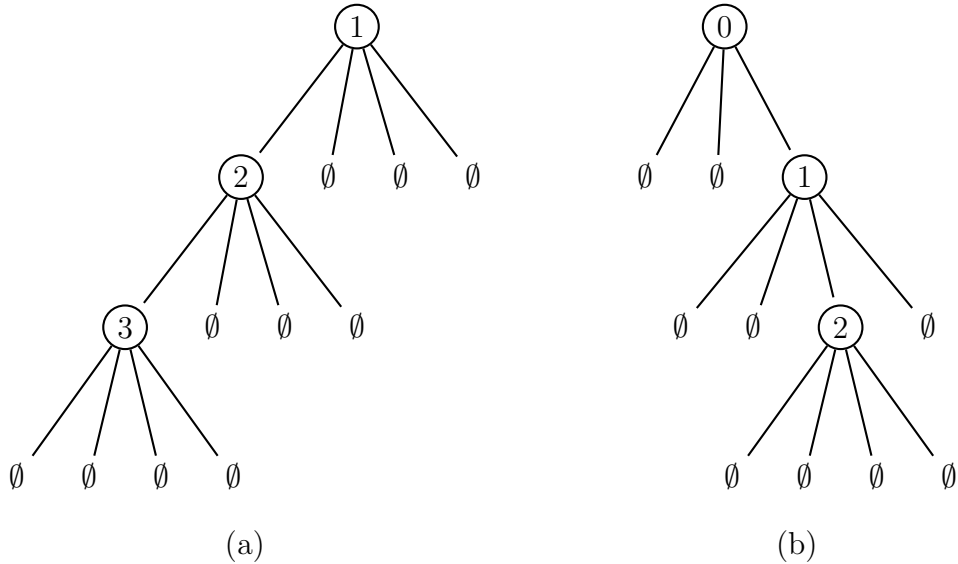

\centering
\begin{tabular}{cc}
  \pstree[nodesepB=3pt]{\Tcircle{1}}{%
    \pstree[nodesepB=3pt]{\Tcircle{2}}{%
      \pstree[nodesepB=3pt]{\Tcircle{3}}{%
         \TR{$\emptyset$}
         \TR{$\emptyset$}
         \TR{$\emptyset$}
         \TR{$\emptyset$}
      }
      \TR{$\emptyset$}
      \TR{$\emptyset$}
      \TR{$\emptyset$}
    }
    \TR{$\emptyset$}
    \TR{$\emptyset$}
    \TR{$\emptyset$}
  }
& \qquad \qquad
  \pstree[nodesepB=3pt]{\Tcircle{0}}{%
    \TR{$\emptyset$}
    \TR{$\emptyset$}
    \pstree[nodesepB=3pt]{\Tcircle{1}}{%
      \TR{$\emptyset$}
      \TR{$\emptyset$}
      \pstree[nodesepB=3pt]{\Tcircle{2}}{%
         \TR{$\emptyset$}
         \TR{$\emptyset$}
         \TR{$\emptyset$}
         \TR{$\emptyset$}
      }
      \TR{$\emptyset$}
    }
  }
\\
\mbox{} & \\
(a) &\qquad\qquad (b) \\
\end{tabular}
\caption{
   (a) The $(3,[3])$--Stirling permutation $333222111$ and its
   corresponding $4$--ary increasing $[3]$--tree.
   (b) The $(3,2,[2])$--Stirling permutation $0\underline{0}1\underline{1}2221$
   and its corresponding $(4,3)$--ary increasing $[2]$--tree. This
   permutation has two ascents at indices 2 and
   4. These ascents are underlined in the permutation for clarity.
} \label{Fig1}
\end{figure}

\begin{definition} \label{def increasing s_forest}
Let $n\geq0$,  $s\geq 1$ be integers. An $(s,[n])$--forest is an ordered 
forest $\bm{F}=(T_1,\ldots,T_s)$ composed by $s$
labelled generalized increasing $X_i$--trees $T_i$, for some generalized ordered
partition $(X_1,\ldots, X_n)$ of $[n]$ (where we allow $X_i=\varnothing$).
Given $\mathbf{u}=(u_1,\ldots,u_s)$ with $u_i\geq 1$ integers, a
$(d,\mathbf{u})$--ary increasing
$(s,[n])$--forest $\bm{F}=(T_1,\ldots,T_s)$ is an $(s,[n])$--forest such each $T_i$ is
a $(d,u_i)$-ary increasing $X_i$-tree, for some
generalized partition $(X_1,\ldots,X_s)$ of $[n]$.
\end{definition}

\begin{theorem} \label{theo.ary increasing_forest}
Let $\nu, s\geq 1$,  $t\geq 0$ be integers, $\bm{t}=(t_1,\ldots,t_s)$ a
generalized ordered partition of $t$ ($t_i\geq0$), and
$\mathbf{1}=(1,\ldots,1)$. The family of
$(\nu+1,\bm{t}+\mathbf{1})$--ary increasing $(s,[n])$--forests is
in natural bijection with the class of $(\nu,\bm{t},n)$--Stirling permutations.
\end{theorem}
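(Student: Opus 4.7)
The plan is to reduce this statement to Theorem~\ref{theo.nu+1.t+1_ary increasing} applied componentwise. The first step is to observe that the Gessel--type construction $\rho\mapsto T(\rho)$ used in the proof of Theorem~\ref{theo.nu+1.t+1_ary increasing} extends without change from $X=[n]$ to an arbitrary finite totally ordered set $X$: the inductive factorization $\rho=\rho_1\, i\, \rho_2\, i\,\cdots\, i\, \rho_{\nu_i+1}$, with $i$ the least letter in $\rho$ and $\nu_i$ its multiplicity, uses only the order relation on $X\cup\{0\}$, not the specific labels. Hence for every finite totally ordered $X$ and integers $\nu\geq 1$, $t\geq 0$, the map $\rho\mapsto T(\rho)$ yields a natural bijection between $(\nu,t,X)$--Stirling permutations and $(\nu+1,t+1)$--ary increasing $X$--trees.

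Given this, I would define the desired bijection $\Phi$ componentwise. Let $\bm{\rho}=(\rho_1,\ldots,\rho_s)$ be a $(\nu,\bm{t},n)$--Stirling permutation associated with a generalized ordered partition $(X_1,\ldots,X_s)$ of $[n]$, so each $\rho_i$ is a $(\nu,t_i,X_i)$--Stirling permutation. Set
$$
\Phi(\bm{\rho}) \;:=\; \bigl(T(\rho_1),\ldots,T(\rho_s)\bigr).
$$
By the $X$--version of Theorem~\ref{theo.nu+1.t+1_ary increasing}, each $T(\rho_i)$ is a $(\nu+1,t_i+1)$--ary increasing $X_i$--tree, so $\Phi(\bm{\rho})$ is a $(\nu+1,\bm{t}+\mathbf{1})$--ary increasing $(s,[n])$--forest with the same underlying ordered partition $(X_1,\ldots,X_s)$. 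The inverse map is constructed by reading the $s$ permutations off the $s$ trees of a given forest via the inverse of the Gessel construction.

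The only step requiring a genuine check is the extension of the Gessel bijection from $[n]$ to a general totally ordered $X$, and this is immediate: the factorization rule, the outdegree count for each internal vertex, and the increasing--labeling property all depend only on the order structure of $X\cup\{0\}$. Everything else is bookkeeping, since the ordered partition $(X_1,\ldots,X_s)$ of $[n]$ is the common piece of data on both sides and the $i$-th component of $\Phi$ is exactly the bijection supplied by (the $X$-version of) Theorem~\ref{theo.nu+1.t+1_ary increasing}. I do not foresee any serious obstacle; the statement is the natural ``forest'' counterpart of the previous ``tree'' theorem.
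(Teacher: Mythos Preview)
Your proposal is correct and follows exactly the route the paper takes: the paper's proof simply says this is ``a straightforward generalization of Theorem~\ref{theo.nu+1.t+1_ary increasing},'' which amounts precisely to your componentwise application of the Gessel bijection (extended to arbitrary totally ordered label sets $X$) over the shared ordered partition $(X_1,\ldots,X_s)$. You have merely made explicit what the paper leaves implicit.
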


\begin{proof}
This is a straightforward generalization of
Theorem~\ref{theo.nu+1.t+1_ary increasing}.
See Figure \ref{Fig2} for a concrete example of this class of forests.
\end{proof}

\medskip

\noindent
{\bf Remarks.} 1. It is important to stress that $k$ ascents in a
Stirling permutation correspond to $k$ ``non-leftmost'' internal nodes
in the corresponding tree/forest representation. See Figures~\ref{Fig1}
and~\ref{Fig2}. Hereafter we will say that a tree/forest has an ascent if
the corresponding generalized Stirling permutation has an ascent. 

2. Park \cite{Park_94a} gives two bijections for
the class of $(\nu,[n])$--Stirling permutations: one in terms of $(\nu+1)$-ary
increasing trees, and another one in terms of (ordered) forests of increasing
trees. We have adapted the former for the class of $(\nu,t,[n])$--Stirling
permutations, but we will not use the latter in the present paper.

%
%
\begin{figure}[htb]
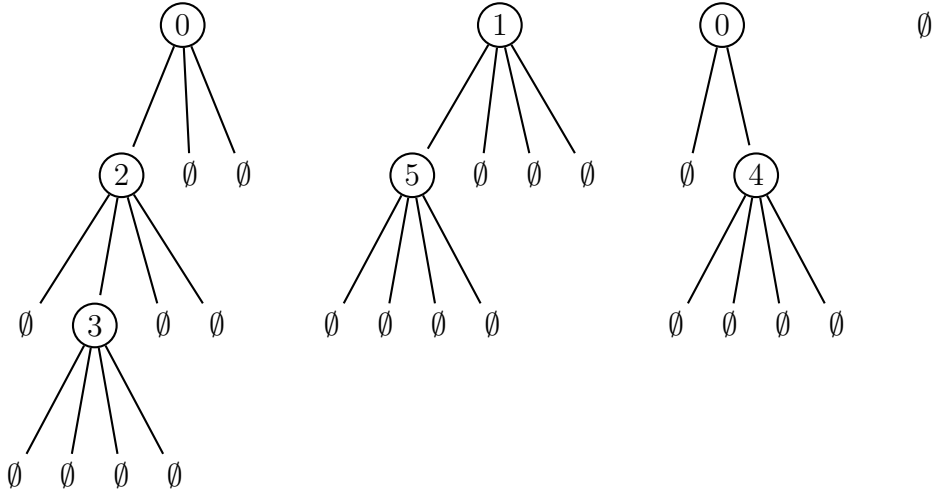

\centering
  \pstree[nodesepB=3pt,treesep=0.5cm]{\Tcircle{0}}{%
    \pstree[nodesepB=3pt,treesep=0.5cm]{\Tcircle{2}}{%
      \TR{$\emptyset$}
      \pstree[nodesepB=3pt,treesep=0.5cm]{\Tcircle{3}}{%
         \TR{$\emptyset$}
         \TR{$\emptyset$}
         \TR{$\emptyset$}
         \TR{$\emptyset$}
      }
      \TR{$\emptyset$}
      \TR{$\emptyset$}
    }
    \TR{$\emptyset$}
    \TR{$\emptyset$}
  }
\qquad
  \pstree[nodesepB=3pt,treesep=0.5cm]{\Tcircle{1}}{%
    \pstree[nodesepB=3pt,treesep=0.5cm]{\Tcircle{5}}{%
      \TR{$\emptyset$}
      \TR{$\emptyset$}
      \TR{$\emptyset$}
      \TR{$\emptyset$}
    }
    \TR{$\emptyset$}
    \TR{$\emptyset$}
    \TR{$\emptyset$}
  }
\qquad
  \pstree[nodesepB=3pt,treesep=0.5cm]{\Tcircle{0}}{%
    \TR{$\emptyset$}
    \pstree[nodesepB=3pt,treesep=0.5cm]{\Tcircle{4}}{%
      \TR{$\emptyset$}
      \TR{$\emptyset$}
      \TR{$\emptyset$}
      \TR{$\emptyset$}
    }
  }
\qquad
\pstree[nodesepB=3pt,treesep=0.5cm]{\Tr{$\emptyset$}}{\Tn}
\vspace*{2mm}
\caption{
The $(3,\bm{t},5)$--Stirling permutation
$(\underline{2}3332200,555111,\underline{0}444,\emptyset)$
corresponding to $\bm{t}=(2,0,1,0)$ and the generalized partition
$(\{2,3\},\{1,5\},\{4\},\emptyset)$ of $[5]$. We show the corresponding
$(4,\bm{t}+\mathbf{1})$--ary $(4,[5])$--forest $\bm{F}=(T_1,T_2,T_3,T_4)$.
}\label{Fig2}
\end{figure}
 
%
%
\section{\texorpdfstring{The $\nu$-order $(s,t)$-Eulerian numbers}%
                        {The nu-order (s,t)-Eulerian numbers}}
\label{sec.nu_eulerian}

We study in detail some properties of the $\nu$-order $(s,t)$-Eulerian numbers
introduced in Definition~\ref{def_gen_Eulerian}, and whose combinatorial
interpretations have been discussed in Theorems~\ref{theo.Eulerian_nu_st}
and~\ref{theo.ary increasing_forest} (see Remark~1 after 
Theorem~\ref{theo.ary increasing_forest}). In this section, 
$s,t$ will be considered indeterminate parameters.
These numbers satisfy the recurrence
relation \eqref{def_recurrence_stnuEulerianOK} which is a particular case
of the one analyzed in \cite{BSV}. By using a generating-function approach, 
that yields a first-order linear partial differential equation which is 
solved with the method of characteristics \cite{BSV}, we obtain  
the exponential generating function (EGF)
\begin{equation}
F^{(\nu)}(x,y;s,t)=\sum_{n,k\geq0} \nueulergen{n}{k}{(\nu)}{(s,t)}
   x^k\frac{y^n}{n!}
\end{equation}
for the $\nu$-order $(s,t)$-Eulerian numbers is given by
\cite[Section~A.1.5]{BSV}:
\begin{equation}
F^{(\nu)}(x,y;s,t) \;=\;
          \left( \frac{ T_\nu\left(e^{y(1-x)^\nu} \, T^{-1}_\nu(x) \right)}
                         {x} \right)^s \,
             \left( \frac{1-x}
                    {1 -  T_\nu\left( e^{y(1-x)^\nu} T^{-1}_\nu(x) \right)}
             \right)^{s+t} \,,
\label{def_EGF_stnuEulerian}
\end{equation}
where $T_\nu$ ($\nu\in\N$) is a one-parameter family of functions given by
\begin{equation}
T_\nu^{-1}(z) \;=\; z \, e^{ Q_\nu(z) } \,, \quad \text{where} \quad
Q_\nu(z)      \;=\; \sum\limits_{k=1}^{\nu-1} \binom{\nu-1}{k} \,
                    \frac{(-z)^k}{k} \,.
\label{def_Tnu}
\end{equation}
For $\nu=1$, $T_1=\bone$ is the identity function, and for $\nu=2$, $T_2$ is
the tree function $T_2=T$ \cite{Corless_96,Corless_97}.

The {\em $\nu$-order $(s,t)$-Eulerian polynomials} are defined as:
\begin{equation}
P^{(\nu)}_n(x;s,t) \;=\; \sum\limits_{k=0}^n \nueulergen{n}{k}{(\nu)}{(s,t)}
     \, x^k
\label{def_stnuEulerian_poly}
\end{equation}
and satisfy that $P^{(\nu)}_n(1;s,t)$ is given by the number of 
$(\nu,t,n)$-Stirling permutations stated in \eqref{eq.number_nu_s_t_n}.
They can be computed by using Theorem~4.1 and Eq.~(4.4) of Ref.~\cite{BSV}:
\begin{subequations}
\label{eq_stnuEulerian_poly}
\begin{align}
P^{(\nu)}_n(x;s,t)  &=  \frac{(1-x)^{s+t+\nu n}}{x^s}\,
                   \frac{n!}{2\pi i} \, \int _{C}
                   \frac{z^{s-1}}{(1-z)^{s+t+1-\nu}}
                   \left[ \log\frac{ze^{Q_\nu(z)}}{xe^{Q_\nu(x)}}
                   \right]^{-n-1} \!\!dz
\label{eq_stnuEulerian_poly1}    \\[2mm]
&= \frac{(1-x)^{s+t+\nu n}}{x^s}\, \lim_{z\rightarrow x}
\frac{\partial^n}{\partial z^n}
\left( \frac{z^{s-1}(z-x)^{n+1}}{(1-z)^{s+t+1-\nu}}
\left[\log\frac{ze^{Q_\nu(z)}}{xe^{Q_\nu(x)}}\right]^{-n-1} \right) \,,
\label{eq_stnuEulerian_poly3}
\end{align}
\end{subequations}
where $C$ is a closed simple curve of index $+1$ surrounding only the
singularity at $z = x$ in the complex $z$-plane.

A Rodrigues-like formula for the $\nu$-order $(s,t)$-Eulerian polynomials
can also be obtained from the integral \eqref{eq_stnuEulerian_poly1}
by performing the change of variables $z e^{Q_\nu(z)} = e^u$ and
$xe^{Q_\nu(x)}=e^v$. Therefore, $z=T_\nu(e^u)$ and $x=T_\nu(e^v)$.
We immediately obtain from \eqref{eq_stnuEulerian_poly1}:
\begin{equation}
P^{(\nu)}_n(T_\nu(e^v);s,t) \;=\;
  \frac{(1-T_\nu(e^v))^{s+t+\nu n}}{T_\nu(e^v)^s} \,
  \frac{d^n}{dv^n}\frac{T_\nu(e^v)^s}{(1-T_\nu(e^v))^{s+t}} \,,
\end{equation}
where actual computations are facilitated by the fact that the derivative of
$T_\nu(x)$ is given in closed form by the expression
\begin{equation}
T'_\nu(x) \;=\; \frac{T_\nu(x)}{x\, (1-T_\nu(x))^{\nu-1}} \,.
\label{def_dTnu}
\end{equation}
An equivalent representation of $P^{(\nu)}_n(x;s,t)$
can be obtained directly from the EGF \eqref{def_EGF_stnuEulerian}
after performing the change of variables $y \mapsto u = (1-x)^\nu y$:
\begin{equation}
P^{(\nu)}_n(x;s,t) \;=\; n! \, \frac{(1-x)^{s+t+\nu n} }{x^s} \,
   [u^n] \frac{ \left( T_\nu\left(e^u \, T^{-1}_\nu(x) \right)
                \right)^s }
              {\left( 1- T_\nu\left( e^u \, T^{-1}_\nu(x) \right)
               \right)^{s+t}} \,.
\label{eq_stnuEulerian_poly4}
\end{equation}
We now illustrate the use of the previous results to derive explicit
expressions for the $(s,t)$-Eulerian numbers and the second order
$(s,t)$-Eulerian numbers. In fact, one can use similar techniques to obtain
formulas for higher-order $(s,t)$-Eulerian numbers, although these
computations are more involved.

%
%
\section{\texorpdfstring{The $(s,t)$-Eulerian numbers}
                        {The (s,t)-Eulerian numbers}} \label{sec.st_eulerian}

When $\nu=1$, we will employ the traditional notation
$A_n^{(s,t)}(x)= P^{(1)}_n(x;s,t)$. By using \eqref{eq_stnuEulerian_poly4},
we immediately get
\begin{equation}
A_n^{(s,t)}(x) \;=\; (1-x)^{s+t+n}\, n!\, [u^n]
                     \frac{e^{s\, u}}{(1-x \, e^u)^{s+t}}\,.
\label{eq_stEulerian_poly5}
\end{equation}
This formula allows us to obtain the following closed expressions
for $A_n^{(s,t)}$:
\begin{align}
A_n^{(s,t)}(x) &\;=\; (1-x)^{s+t+n} \sum\limits_{j\ge 0}
                   \frac{ (s+t)^{\overline{j}} }{j!} \, (s+j)^n \, x^j
\label{eq_stEulerian_poly7}\\
  &\;=\; \sum\limits_{k\ge 0} x^k \sum\limits_{j=0}^k
                   (-1)^{k-j} \,
                   \frac{(n+s+t)^{\underline{k-j}}}{j!\, (k-j)!} \,
                   (s+t)^{\overline{j}} \, (s+j)^n \,,
\label{eq_stEulerian_poly8}
\end{align}
where $x^{\overline{j}}=x(x+1)\cdots(x+j-1)$ and 
$x^{\underline{j}}=x(x-1)\cdots(x-j+1)$ are the raising and falling factorials,
respectively. From \eqref{eq_stEulerian_poly7}, we easily obtain

\begin{proposition} \label{prop.stEulerian}
The $(s,t)$-Eulerian polynomials $A_n^{(s,t)}$ satisfy the relation
\begin{equation}
\frac{ x \, A_n^{(s,t)}(x) }{ (1-x)^{n+s+t} } \;=\;
 \sum\limits_{k\ge 1} \frac{ (s+t)^{\overline{k-1}} }{(k-1)!} \,
       (k + s -1)^n \, x^k \,,
\end{equation}
for any $n\ge 0$ and arbitrary parameters $s,t$.
\end{proposition}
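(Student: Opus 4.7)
The plan is to observe that this identity is a direct algebraic rearrangement of the already-established formula \eqref{eq_stEulerian_poly7}, so the only work is to expose it as such.

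First I would take \eqref{eq_stEulerian_poly7}, namely
\begin{equation*}
A_n^{(s,t)}(x) \;=\; (1-x)^{n+s+t} \sum_{j\ge 0} \frac{(s+t)^{\overline{j}}}{j!}\,(s+j)^n\, x^j,
\end{equation*}
and divide both sides by $(1-x)^{n+s+t}$. Then I would multiply both sides by $x$, so that the sum on the right becomes $\sum_{j\ge 0}\frac{(s+t)^{\overline{j}}}{j!}(s+j)^n x^{j+1}$. Finally I would reindex by setting $k=j+1$, which turns $(s+j)^n$ into $(k+s-1)^n$ and $(s+t)^{\overline{j}}/j!$ into $(s+t)^{\overline{k-1}}/(k-1)!$, and runs $k$ over $k\ge 1$. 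This matches the right-hand side of the claimed identity exactly.

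Since \eqref{eq_stEulerian_poly7} has already been derived from the coefficient extraction formula \eqref{eq_stEulerian_poly5} via the binomial expansion $(1-xe^u)^{-(s+t)}=\sum_{j\ge 0}\binom{-(s+t)}{j}(-xe^u)^j = \sum_{j\ge 0} \frac{(s+t)^{\overline{j}}}{j!}(xe^u)^j$ followed by reading off $[u^n]$ of $e^{(s+j)u}$, there is no obstacle of substance here: the proposition is essentially a restatement packaging \eqref{eq_stEulerian_poly7} in a form that makes the generating-function identity $\sum_k (k+s-1)^n\, \binom{k+s+t-2}{k-1}\, x^k$ recognizable. The only bookkeeping concern is verifying the shift of the rising factorial and the factorial under $j\mapsto k-1$, which is immediate. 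Consequently this proposition requires only a few lines of algebra and no new ideas beyond \eqref{eq_stEulerian_poly7}.
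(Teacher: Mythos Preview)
Your proposal is correct and follows exactly the same route as the paper, which simply states that the proposition is obtained ``easily'' from \eqref{eq_stEulerian_poly7}. The division by $(1-x)^{n+s+t}$, multiplication by $x$, and reindexing $k=j+1$ that you spell out are precisely the implicit steps behind that remark.
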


This proposition generalizes the well-known formulas for the ordinary
Eulerian polynomials $A_n=A^{(1,0)}_n$:
\begin{equation}
\frac{ x \, A_n(x) }{ (1-x)^{n+1} } \;=\; \sum\limits_{k\ge 1} k^n \, x^k \,,
\label{eq.ratio_Eulerian.poly}
\end{equation}
and for the Eulerian polynomials with the traditional indexing $A^{(0,1)}_n$
\cite[Theorem~1.21]{Bona_12}:
\begin{equation}
\frac{A^{(0,1)}_n(x) }{ (1-x)^{n+1} } \;=\; \sum\limits_{k\ge 0} k^n \, x^k \,.
\label{eq.ratio_Eulerian.poly_Bis}
\end{equation}

A closed expression for $\eulergen{n}{k}{(s,t)}$ can be obtained from
\eqref{eq_stEulerian_poly8} to conclude that

\begin{theorem} \label{theo.stEulerian}
The generalized $(s,t)$-Eulerian numbers  are equal to
\begin{equation}
\eulergen{n}{k}{(s,t)} \;=\;\nueulergen{n}{k}{(1)}{(s,t)}
                       \;=\; \frac{1}{k!} \sum\limits_{j=0}^k
  (-1)^{k-j} \, \binom{k}{j} \, (n+s+t)^{\underline{k-j}} \,\,
   (s+t)^{\overline{j}} \, (s+j)^n
\label{eq.stEulerian}
\end{equation}
for $n\ge 0$ and $0\le k\le n$.
\end{theorem}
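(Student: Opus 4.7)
The plan is to read off the coefficient of $x^k$ in the expansion \eqref{eq_stEulerian_poly8} and match it against $\eulergen{n}{k}{(s,t)}$ through \eqref{def_stnuEulerian_poly}. Using the trivial rewriting $\frac{1}{j!\,(k-j)!}=\frac{1}{k!}\binom{k}{j}$, the coefficient of $x^k$ in \eqref{eq_stEulerian_poly8} becomes exactly the right-hand side of \eqref{eq.stEulerian}, so the theorem will follow at once.

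Since \eqref{eq_stEulerian_poly8} is only announced in the excerpt preceding the theorem, the substance of the argument is its derivation. First I would specialize \eqref{eq_stnuEulerian_poly4} to $\nu=1$; because $T_1$ is the identity function, $T_1(e^u T_1^{-1}(x))=xe^u$, and \eqref{eq_stnuEulerian_poly4} collapses to \eqref{eq_stEulerian_poly5}:
\[
A_n^{(s,t)}(x)\;=\;(1-x)^{s+t+n}\,n!\,[u^n]\,\frac{e^{su}}{(1-xe^u)^{s+t}}.
\]
Next, expand the denominator by the generalized binomial series
\[
\frac{1}{(1-xe^u)^{s+t}}\;=\;\sum_{j\ge 0}\frac{(s+t)^{\overline{j}}}{j!}\,(xe^u)^j,
\]
multiply by $e^{su}$, and extract $n!\,[u^n]\,e^{(s+j)u}=(s+j)^n$, which gives \eqref{eq_stEulerian_poly7}. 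Then expand $(1-x)^{s+t+n}=\sum_{m\ge 0}(-1)^m\frac{(s+t+n)^{\underline{m}}}{m!}\,x^m$ and convolve with \eqref{eq_stEulerian_poly7}; setting $m=k-j$ in the Cauchy product produces \eqref{eq_stEulerian_poly8}.

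The only mild obstacle is justifying these formal-series manipulations when $s$ and $t$ are indeterminate parameters. This is harmless: both sides of \eqref{eq.stEulerian} are polynomials in $(s,t)$ --- the left-hand side by an immediate induction from the recurrence \eqref{def_recurrence_stnuEulerianOK}, the right-hand side by inspection --- so it suffices to establish the identity on the Zariski-dense set of positive integers $s,t$, where all of the above expansions are honest convergent binomial series. Beyond that, the proof is pure coefficient bookkeeping.
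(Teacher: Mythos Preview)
Your proposal is correct and follows essentially the same route as the paper: the paper obtains \eqref{eq_stEulerian_poly7} and \eqref{eq_stEulerian_poly8} from \eqref{eq_stEulerian_poly5} exactly by the binomial expansions you spell out, and then the theorem is stated as the immediate consequence of reading off the $x^k$ coefficient in \eqref{eq_stEulerian_poly8}. Your added remark on polynomiality in $(s,t)$ is a harmless elaboration; the paper simply treats these as formal power series manipulations.
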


\medskip

\noindent
{\bf Remarks.} 1. It is obvious in Eq.~\eqref{eq.stEulerian} that
the numbers $\eulergen{n}{k}{(s,t)}$ are polynomials in both parameters
$s,t$.

2. The ordinary Eulerian numbers  with the standard
\cite[Eq.~(6.38)]{Graham_94}  and  the  traditional \cite{Comtet_74}
ordering are respectively given by
\begin{subequations}
\begin{align}
\euler{n}{k} &\;=\; \eulergen{n}{k}{(1,0)} \;=\; \sum\limits_{j=0}^k
  (-1)^j \, \binom{n+1}{j} \,  (k-j+1)^n \,,
\label{eq.Eulerian}
\\
A(n,k) &\;=\; \eulergen{n}{k}{(0,1)} \;=\; \sum\limits_{j=0}^k
  (-1)^j \, \binom{n+1}{j} \,  (k-j)^n  \;=\; \euler{n}{k-1} \,.
\label{eq.Eulerian_Bis}
\end{align}
\end{subequations}

3. The \textit{shifted} $r$-Eulerian numbers corresponding to $(s,t)=(r,0)$ are
a natural generalization of the $r$-Eulerian numbers \cite[p.~215]{Riordan_58},
\cite[Chapter~II, p.~17]{Foata_70}, \cite{Magagnosc_80},
\cite[Problems~17 and~18, p.~38]{Bona_12} that fit in the framework of
the problem discussed in Ref.~\cite{BSV}.

4. Notice that the $(s,-s)$-Eulerian numbers take the simple form
(cf. \eqref{def_EGF_stnuEulerian}):
\begin{equation}
\eulergen{n}{k}{(s,-s)} \;=\; (-1)^k \, \binom{n}{k} \, s^n \,.
\end{equation}

%
%
\section{\texorpdfstring{The second order $(s,t)$-Eulerian numbers}
                        {The second order (s,t)-Eulerian numbers}} 
\label{sec.second_st_eulerian}

When $\nu=2$, it is customary to write $B_n^{(s,t)}(x)=P^{(2)}_n(x;s,t)$.
By using (\ref{eq_stnuEulerian_poly4}) we immediately get
\begin{equation}
B_n^{(s,t)}(x) \;=\;
n!\, \frac{(1-x)^{s+t+2n}}{x^s} \,
        [u^n] \frac{\left(T\left( T^{-1}(x)\, e^u \right)\right)^s}
              {\left( 1 - T\left( T^{-1}(x)\, e^u \right) \right)^{s+t}}
\label{eq_2nd_stEulerian_poly5} \,,
\end{equation}
where $T$ is the tree function
\cite{Corless_96,Corless_97}. For $|z|< e^{-1}$, this function is given by
the power series:
\begin{equation}
T(z) \;=\; \sum\limits_{n=1}^\infty \frac{ n^{n-1}}{n!} \, z^n \,.
\label{def_Tree}
\end{equation}
It satisfies that $T(z)\exp(-T(z))=z$ (or equivalently,
$T^{-1}(z)=z e^{-z}$), and it is closely related to the Lambert $W$ function
\cite{Corless_96,Corless_97}: $T(z) = - W(-z)$.

Using \eqref{eq_2nd_stEulerian_poly5}, it is not very difficult to obtain an
explicit closed form for both the second-order $(s,t)$-Eulerian polynomials
and the second-order $(s,t)$-Eulerian numbers. By expanding
$(1-T(\xi))^{-(s+t)}$ in powers of $T(\xi)$, where
$\xi=T^{-1}(x) \,e^u = x\, e^{-x+u}$, we get:
\begin{equation}
B_n^{(s,t)}(x) \;=\; n! \, \frac{(1-x)^{s+t+2n}}{x^s} \,
  \sum\limits_{j=0}^\infty
  \frac{ (s+t)^{\overline{j}}}{j!} \, [u^n] T(\xi)^{s+j} \,.
\label{eq_2nd_stEulerian_poly7}
\end{equation}

One important property of the tree function \eqref{def_Tree} is that the
Taylor expansion at $z=0$ of its powers can be computed in closed form
\cite[Eq.~(10)]{Corless_97}:
\begin{equation}
T(z)^s \;=\; \sum\limits_{k=0}^\infty  \frac{s\, (k+s)^{k-1}}{k!}\, z^{s+k}\,.
\label{def_power_Tree}
\end{equation}
Using this expression in \eqref{eq_2nd_stEulerian_poly7} we obtain
\begin{multline}
B_n^{(s,t)}(x) \;=\;  (1-x)^{s+t+2n} \,
\sum\limits_{p=0}^\infty \frac{x^p}{p!} e^{-x(p+s)} \\
   \times \sum\limits_{j=0}^p
  \binom{p}{j}\, (s+t)^{\overline{j}}\, (s+j)\, (p+s)^{n+p-j-1}\,.
\label{eq_2nd_stEulerian_poly9}
\end{multline}

From this equation we easily obtain the following
proposition, which resembles Proposition~\ref{prop.stEulerian} for the
$(s,t)$-Eulerian polynomials $A_n^{(s,t)}$:

\begin{proposition} \label{prop.2nd_stEulerian}
The second-order $(s,t)$-Eulerian polynomials $B_n^{(s,t)}$
satisfy for any $n\ge 0$ and arbitrary parameters $s,t$ the relation
\begin{multline}
\frac{ x e^{x(s-1)}\, B_n^{(s,t)}(x) }{ (1-x)^{2n+s+t} }\;=\;  \,
 \sum\limits_{k\ge 1} \frac{\left(x e^{-x}\right)^k}{(k-1)!} \\[2mm]
 \times
 \sum\limits_{j=0}^{k-1} \binom{k-1}{j} \, (s+t)^{\overline{j}}\, (s+j) \,
       (k + s -1)^{n+k-j-2} \,.
\end{multline}
\end{proposition}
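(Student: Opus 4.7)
The plan is to derive the identity directly from equation \eqref{eq_2nd_stEulerian_poly9}, which already gives $B_n^{(s,t)}(x)$ as a double sum with a $(1-x)^{s+t+2n}$ prefactor and an exponential factor $e^{-x(p+s)}$. Since the left-hand side of the proposition carries the inverse prefactor $(1-x)^{-(2n+s+t)}$ together with an extra factor $x\,e^{x(s-1)}$, the algebra should cancel nicely.

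First, I would divide \eqref{eq_2nd_stEulerian_poly9} by $(1-x)^{2n+s+t}$ and multiply by $x\,e^{x(s-1)}$, which yields
\begin{equation*}
\frac{x\,e^{x(s-1)}\,B_n^{(s,t)}(x)}{(1-x)^{2n+s+t}}
\;=\; \sum_{p=0}^{\infty} \frac{x^{p+1}\,e^{x(s-1)-x(p+s)}}{p!}
\sum_{j=0}^{p} \binom{p}{j}(s+t)^{\overline{j}}(s+j)(p+s)^{n+p-j-1}.
\end{equation*}
The key simplification is the exponential: combining the two exponential factors gives $e^{x(s-1) - x(p+s)} = e^{-x(p+1)}$, so the outer factor becomes $x^{p+1} e^{-x(p+1)} = (xe^{-x})^{p+1}$.

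Next, I would perform the index shift $k = p+1$ (so $p = k-1$ and the sum starts at $k=1$). Under this substitution, $\binom{p}{j} \to \binom{k-1}{j}$, $(p+s) \to (k+s-1)$, the exponent $n+p-j-1$ becomes $n+k-j-2$, and $p!$ becomes $(k-1)!$. The inner sum over $j$ runs from $0$ to $k-1$, matching exactly the right-hand side of the proposition.

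There is no serious obstacle: the proof is a direct algebraic simplification of \eqref{eq_2nd_stEulerian_poly9}, with the only nontrivial observation being that the extra factor $x\,e^{x(s-1)}$ chosen in the statement is precisely what is needed to convert $x^p e^{-x(p+s)}$ into the clean expression $(xe^{-x})^{k}$ after re-indexing. All the real combinatorial content has already been packaged into equation \eqref{eq_2nd_stEulerian_poly9}, which in turn rests on the closed-form expansion \eqref{def_power_Tree} of powers of the tree function.
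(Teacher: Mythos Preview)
Your proposal is correct and is exactly the approach the paper takes: the text immediately preceding the proposition says ``From this equation we easily obtain the following proposition,'' where ``this equation'' is precisely \eqref{eq_2nd_stEulerian_poly9}, and your division by $(1-x)^{2n+s+t}$, multiplication by $x e^{x(s-1)}$, and shift $k=p+1$ reproduce the stated identity verbatim. The combinatorial identities \eqref{eq.lemma1} mentioned just afterward are not needed for the general proposition itself; they are only used to collapse the inner $j$-sum in the special case $(s,t)=(1,0)$.
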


When $(s,t)=(1,0)$, we get the following relation for the ordinary
second-order Eulerian polynomials $B_n(x)=B_n^{(1,0)}(x)$:
\begin{equation}
\frac{ x \, B_n(x) }{ (1-x)^{2n+1} } \;=\; \sum\limits_{k\ge1}
\frac{ k^{n+k-1}}{(k-1)!} \, \left(x e^{-x}\right)^k \,,
\end{equation}
that resembles Eq.~\eqref{eq.ratio_Eulerian.poly} for the ordinary
Eulerian polynomials $A_n(x)$. The proof of these results makes use of the
following combinatorial identities:
\begin{equation}
1 \;=\;
\sum\limits_{j=0}^n \binom{n}{j}\, j! \, j \, \frac{1}{n^{j+1}} \;=\;
\sum\limits_{j=0}^n \binom{n}{j}\, (j+1)! \, \frac{1}{(n+1)^{j+1}} \,.
\label{eq.lemma1}
\end{equation}

A closed expression for the second-order generalized $(s,t)$-Eulerian numbers
can be obtained by writing \eqref{eq_2nd_stEulerian_poly9} in the form
\begin{eqnarray}
B_n^{(s,t)}(x) &=&  \sum\limits_{k\ge 0} \frac{x^k}{k!} \,
                    \sum\limits_{r=0}^k \binom{k}{r} \,
                    (s+t+2n)^{\underline{k-r}}
                    \sum\limits_{p=0}^r \binom{r}{p} (-1)^{k-p}
                    \nonumber \\
  & & \qquad \qquad \times
                    \sum\limits_{j=0}^p \binom{p}{j} (s+t)^{\overline{j}}\,
                    (s+j)\, (p+s)^{n+r-j-1} \,,
\end{eqnarray}
to conclude

\begin{theorem} \label{theo.2nd_stEulerian}
The second-order generalized $(s,t)$-Eulerian numbers are equal to
\begin{multline}
\eulersecondgen{n}{k}{(s,t)} \;=\; \nueulergen{n}{k}{(2)}{(s,t)}
                             \;=\;  \frac{1}{k!} \,
                    \sum\limits_{r=0}^k \binom{k}{r} \,
                    (s+t+2n)^{\underline{k-r}}
                    \sum\limits_{p=0}^r \binom{r}{p} (-1)^{k-p}
                    \\
  \times     \sum\limits_{j=0}^p \binom{p}{j} (s+t)^{\overline{j}}\,
              (s+j)\, (p+s)^{n+r-j-1}
\label{eq.2nd_stEulerian}
\end{multline}
for $n\ge 0$ and $0\le k\le n$.
\end{theorem}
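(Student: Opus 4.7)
The plan is to extract $[x^k]\,B_n^{(s,t)}(x)$ directly from the closed form \eqref{eq_2nd_stEulerian_poly9} and then invoke the definition \eqref{def_stnuEulerian_poly}, namely $B_n^{(s,t)}(x)=\sum_k \eulersecondgen{n}{k}{(s,t)}\, x^k$. The displayed double expansion of $B_n^{(s,t)}(x)$ appearing just above the theorem is exactly what must be produced; reading off the coefficient of $x^k$ then yields \eqref{eq.2nd_stEulerian}.

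First, I would expand the exponential factor in \eqref{eq_2nd_stEulerian_poly9} as
$$e^{-x(p+s)} \;=\; \sum_{q\geq 0}\frac{(-(p+s))^q}{q!}\,x^q,$$
so that $x^p e^{-x(p+s)}=\sum_{r\geq p}\frac{(-(p+s))^{r-p}}{(r-p)!}\,x^r$. The crucial algebraic observation is
$$(p+s)^{n+p-j-1}\,(p+s)^{r-p} \;=\; (p+s)^{n+r-j-1},$$
which explains why the exponent $n+p-j-1$ in \eqref{eq_2nd_stEulerian_poly9} becomes the exponent $n+r-j-1$ in the statement of the theorem. Swapping the order of summation (from $\sum_{p\geq 0}\sum_{r\geq p}$ to $\sum_{r\geq 0}\sum_{p=0}^r$) turns the outer series in \eqref{eq_2nd_stEulerian_poly9} into a single power series $\sum_{r\geq 0}\frac{x^r}{r!}\,d_r$, where $d_r$ is obtained from the inner double sum of \eqref{eq_2nd_stEulerian_poly9} by replacing $(p+s)^{n+p-j-1}$ with $(p+s)^{n+r-j-1}$ and attaching the factor $\binom{r}{p}(-1)^{r-p}$ under the $p$-sum.

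Next, I would expand
$$(1-x)^{s+t+2n} \;=\; \sum_{m\geq 0}\frac{(-1)^m\,(s+t+2n)^{\underline{m}}}{m!}\,x^m$$
via the generalized binomial theorem, treated as an identity of formal power series in $x$ (this is legitimate because, for $s,t$ regarded as indeterminates, $B_n^{(s,t)}(x)$ is a polynomial in $x$ of degree at most $n$, so any transcendental contributions cancel when both factors are combined). Multiplying the two series and collecting $[x^k]$ via $m+r=k$, the signs combine as $(-1)^{k-r}(-1)^{r-p}=(-1)^{k-p}$ and the factorials rearrange as $\frac{1}{p!\,(r-p)!\,(k-r)!}=\frac{1}{k!}\binom{k}{r}\binom{r}{p}$, reproducing precisely the displayed expansion of $B_n^{(s,t)}(x)$ immediately preceding the theorem.

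There is no genuine obstacle in this argument: once the exponent identity $(p+s)^{(n+p-j-1)+(r-p)}=(p+s)^{n+r-j-1}$ is noted, the remainder is careful sign tracking and factorial reorganization. The only subtlety worth one explicit line is the justification of the formal binomial expansion for the non-integer exponent $s+t+2n$, which is routine since $s,t$ are treated as formal parameters and the resulting polynomial identity in $s,t$ is then valid universally.
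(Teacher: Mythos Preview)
Your proposal is correct and follows precisely the same route as the paper: starting from \eqref{eq_2nd_stEulerian_poly9}, expanding $e^{-x(p+s)}$ and $(1-x)^{s+t+2n}$ as power series, swapping the $p$- and $r$-sums, and reading off $[x^k]$. The paper compresses all of this into the single displayed identity immediately preceding the theorem, whereas you spell out the key exponent combination $(p+s)^{n+p-j-1}(p+s)^{r-p}=(p+s)^{n+r-j-1}$ and the sign/factorial bookkeeping explicitly; there is no substantive difference in approach.
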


\medskip

\noindent
{\bf Remarks.} 1. Again, from Eq.~\eqref{eq.2nd_stEulerian} we see that
the numbers $\eulersecondgen{n}{k}{(s,t)}$ are polynomials in both parameters
$s,t$.

2. The  ordinary second-order Eulerian numbers  with the standard
\cite[Eq.~(6.38)]{Graham_94}  and  the  traditional \cite{Comtet_74}
ordering are respectively given by
\begin{subequations}
\begin{align}
\eulersecond{n}{k} &\;=\; \eulersecondgen{n}{k}{(1,0)} \;=\;
\sum\limits_{r=0}^k
 (-1)^{k-r} \, \binom{1+2n}{k-r} \, \stirlingsubset{n+r+1}{r+1} \,,
\label{eq.2nd_Eulerian}
\\
B_{n,k} &\;=\; \eulersecondgen{n}{k}{(0,1)} \;=\;
   \sum\limits_{r=0}^k
   (-1)^{k-r} \, \binom{1+2n}{k-r} \, \stirlingsubset{n+r}{r} \;=\;
   \eulersecond{n}{k-1} \,,
\label{eq.2nd_Eulerian_Bis}
\end{align}
\end{subequations}
where the numbers $\stirlingsubset{n}{k}$ are the standard
Stirling subset numbers \cite{Graham_94}. The inverse relation of
Eq.~\eqref{eq.2nd_Eulerian} is given in \cite[Eq.~(6.43)]{Graham_94}.

3. The second-order $(s,-s)$-Eulerian numbers take the form
\begin{equation}
\eulersecondgen{n}{k}{(s,-s)} \;=\; s \sum\limits_{r=0}^k \frac{1}{r!}
\, \binom{2n}{k-r} \sum\limits_{p=0}^r \binom{r}{p} \, (-1)^{k-p} \,
(p+s)^{n+r-1} \,.
\label{eq.2nd_s-s-Eulerian}
\end{equation}

%
%
\section{\texorpdfstring{The $\nu$-order generalized $(s,t)$-Ward numbers}%
                        {The nu-order generalized (s,t)-Ward numbers}} 
\label{sec.nu_ward}

The standard Ward numbers and the second-order Eulerian numbers form an 
inverse Riordan pair \eqref{SecondOrderEuler_vs_Ward}. A natural question is
how to generalize these Ward numbers such that they form a Riordan inverse  
pair with the $\nu$-order $(s,t)$-Eulerian numbers. To achieve this goal, we
start by defining:  

\begin{definition} \label{def_gen_Ward}
Let $\nu,s\geq 1$ and $t\geq 0$ be integers.
The $\nu$-order generalized $(s,t)$-Ward numbers $W^{(\nu)}(n,k;s,t)$
are defined as those satisfying the recurrence
\begin{multline}
W^{(\nu)}(n,k;s,t)  \;=\; (k+s)\, W^{(\nu)}(n-1,k;s,t) \\
+
(\nu n + k + s + t - 1 -\nu) \, W^{(\nu)}(n-1,k-1;s,t)
                           + \delta_{k0}\delta_{n0} \,,
\label{def_recurrence_stnuWardOK}
\end{multline}
with the additional conditions $W^{(\nu)}(n,k;s,t)=0$ if $n<0$ or
$k<0$.
\end{definition}

The family of $\nu$-order generalized $(s,t)$-Ward numbers  is related to the
$\nu$-order $(s,t)$-Eulerian numbers by a \emph{non-trivial} involution
$F\to \widehat{F}$ that can be derived from the following:

\begin{proposition} \label{prop.involution}
Let $F(x,y)=F(x,y;\bm{\mu})$ be the solution of
\begin{equation}
-(\beta + \beta'\, x) \, x  \, \frac{\partial F}{\partial x} +
(1-\alpha \, y - \alpha' \, x \, y  ) \, \frac{\partial F}{\partial y} \\
\;=\; (\alpha + \gamma + (\alpha' + \beta' + \gamma')\, x ) \, F \,,
\label{eq_PDE_final}
\end{equation}
with parameters $\bm{\mu}= (\alpha,\beta,\gamma;\alpha',\beta',\gamma')$,
$\beta\neq 0$, and initial condition $F(x,0)=F(x,0;\bm{\mu})$ $=1$. Then,
\begin{equation}
\widehat{F}(x,y)\;=\; \widehat{F}(x,y; {\bm{\widehat{\mu}}} ) \;=\;
F\left(  \frac{ \beta \, x}{\beta - \beta' \, x} ,
            y\, \frac{\beta - \beta' \, x}{\beta} ;
                        {\bm{\mu}}  \right) \,,
\label{def_fhat}
\end{equation}
is a solution of Eq.~\eqref{eq_PDE_final} with parameters
\begin{equation}
\bm{\widehat{\mu}} \;=\;
\left( \alpha,\beta,\gamma; \alpha'+\beta'-\frac{\alpha\,\beta'}{\beta},
             -\beta',\gamma'+\beta'-\frac{\gamma\,\beta'}{\beta} \right) \,,
\label{def_M2}
\end{equation}
and initial condition
$\widehat{F}(x,0)=\widehat{F}(x,0;{\bm{\widehat{\mu}}} ) =1$.
\end{proposition}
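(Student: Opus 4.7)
The plan is to verify the claim by a direct change of variables. Define
\begin{equation*}
u(x) \;=\; \frac{\beta x}{\beta-\beta' x}, \qquad
v(x,y) \;=\; \frac{y(\beta-\beta' x)}{\beta},
\end{equation*}
so that $\widehat{F}(x,y) = F(u,v;\bm{\mu})$. The three identities I will rely on throughout are $uv = xy$, $\beta-\beta' x = \beta^2/(\beta+\beta' u)$, and $y = v(\beta+\beta' u)/\beta$; together they explain why the transformation is an involution (squaring it amounts to replacing $\beta'$ by $-\beta'$ and then by $\beta'$ again) and they keep the algebra that follows manageable.

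Next I would compute the partial derivatives of $\widehat{F}$ by the chain rule, using
\begin{equation*}
u_x \;=\; \frac{\beta^2}{(\beta-\beta' x)^2} \;=\; \frac{(\beta+\beta' u)^2}{\beta^2}, \qquad
v_x \;=\; -\frac{\beta' y}{\beta}, \qquad
v_y \;=\; \frac{\beta}{\beta+\beta' u},
\end{equation*}
and substitute these into the left-hand side of~\eqref{eq_PDE_final} written with parameters $\bm{\widehat{\mu}}$. The prescription $\widehat{\beta}'=-\beta'$ turns the coefficient $-(\beta+\widehat{\beta}'x)x$ into $-(\beta-\beta'x)x$, which combines cleanly with $u_x$ to yield the single $F_u$ contribution $-\beta u\,F_u$. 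Collecting all remaining terms into the $F_v$ coefficient and multiplying through by $(\beta+\beta' u)/\beta$ to clear denominators, the equation reduces to
\begin{equation*}
-u(\beta+\beta'u)\,F_u + [\,1 + (\beta'-\widehat{\alpha}')uv - \alpha v(\beta+\beta' u)/\beta\,]\, F_v \;=\; R(u,v)\, F,
\end{equation*}
where $R(u,v)$ is the same transform applied to the right-hand side of~\eqref{eq_PDE_final}.

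The last step is to compare this reduced identity with the original PDE for $F$ at $(u,v)$ with parameters $\bm{\mu}$. The $F_u$ coefficient already matches; requiring the $F_v$ coefficient to match forces $\widehat{\alpha}' = \alpha' + \beta' - \alpha\beta'/\beta$, and requiring the $F$ coefficient to match forces $\widehat{\alpha}'+\widehat{\beta}'+\widehat{\gamma}' = \alpha'+\beta'+\gamma' - (\alpha+\gamma)\beta'/\beta$. Together with $\widehat{\beta}' = -\beta'$, these three conditions reproduce exactly the identities prescribed in~\eqref{def_M2}. The initial condition is automatic: when $y=0$ one has $v=0$, so $\widehat{F}(x,0) = F(u(x),0;\bm{\mu}) = 1$. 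The only real difficulty is bookkeeping; once one exploits $uv=xy$ and the explicit form of $\beta-\beta' x$ in terms of $u$, the matching of coefficients pins down $\bm{\widehat{\mu}}$ uniquely and without residual constraints.
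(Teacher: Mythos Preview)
Your proof is correct and follows exactly the route the paper indicates: the paper's own proof merely says that the straightforward argument is to perform the change of variables in the PDE and regroup terms, which is precisely what you have done in detail. Your identities $uv=xy$, $\beta-\beta' x=\beta^2/(\beta+\beta' u)$, and $x(\beta+\beta' u)/\beta=u$ make the coefficient-matching clean, and the resulting constraints on $\widehat{\alpha}',\widehat{\beta}',\widehat{\gamma}'$ agree with~\eqref{def_M2}; the initial condition check is immediate as you note.
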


The straightforward proof relies on making the appropriate change
of variables in Eq.~\eqref{eq_PDE_final}, and then regrouping the resulting
terms. Proposition \ref{prop.involution} implies

\begin{corollary} \label{coro.involution}
If $\binomvert{n}{k}$ (resp.\/ $\widehat{\binomvert{n}{k}}$) is the solution
of
\begin{eqnarray}
 \binomvert{n}{k} =
  (\alpha n + \beta k + \gamma)    \, \binomvert{n-1}{k}
  +
  (\alpha' n + \beta' k + \gamma') \, \binomvert{n-1}{k-1} \,+\,
 \delta_{n0}\delta_{k0}
  \label{eq_binomvert}
\end{eqnarray}
 with parameters $\bm{\mu}$ (resp.\/ $\bm{\widehat{\mu}}$), then
\begin{subequations}
\label{def_relations1-2}
\begin{align}
\binomvert{n}{k} &\;=\; \sum\limits_{j=0}^k \widehat{\binomvert{n}{j}} \,
     \binom{n-j}{n-k} \, \left( \frac{\beta'}{\beta} \right)^{k-j} \,,
\label{def_relation1} \\[2mm]
\widehat{\binomvert{n}{k}} &\;=\; \sum\limits_{j=0}^k \binomvert{n}{j} \,
     \binom{n-j}{n-k} \, \left( -\frac{\beta'}{\beta} \right)^{k-j} \,.
\label{def_relation2}
\end{align}
\end{subequations}
\end{corollary}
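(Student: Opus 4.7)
The plan is to prove both inversion identities by translating the recurrence \eqref{eq_binomvert} into a PDE for its exponential generating function, invoking Proposition~\ref{prop.involution} to relate the EGFs for the unhatted and hatted parameters, and then extracting coefficients by the binomial theorem. Introduce
\[
F(x,y;\bm{\mu}) \;=\; \sum_{n,k\ge 0} \binomvert{n}{k} \, x^k \, \frac{y^n}{n!}\,,
\qquad
\widehat{F}(x,y;\bm{\widehat{\mu}}) \;=\; \sum_{n,k\ge 0} \widehat{\binomvert{n}{k}} \, x^k \, \frac{y^n}{n!}\,.
\]

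First I would multiply \eqref{eq_binomvert} by $x^k y^{n-1}/(n-1)!$ and sum over $n\ge 1,\,k\ge 0$; a direct calculation shows that this is precisely the PDE \eqref{eq_PDE_final} for $F$, while the Kronecker-delta source yields the initial condition $F(x,0)=1$. The same argument applied with the hatted parameters shows that $\widehat{F}$ is the unique formal-power-series solution of \eqref{eq_PDE_final} for parameters $\bm{\widehat{\mu}}$ with $\widehat{F}(x,0)=1$. By Proposition~\ref{prop.involution}, the transformed function
\[
G(x,y) \;:=\; F\!\left( \frac{\beta\, x}{\beta - \beta'\, x},\; y\,\frac{\beta - \beta'\, x}{\beta};\; \bm{\mu} \right)
\]
also solves this Cauchy problem, so uniqueness of the solution to \eqref{eq_PDE_final} gives $\widehat{F}(x,y)=G(x,y)$.

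The last step is coefficient extraction. Since
\[
\left( \frac{\beta\, x}{\beta - \beta'\, x} \right)^{\!j}
\left( \frac{\beta - \beta'\, x}{\beta} \right)^{\!n}
\;=\; x^j \left( 1 - \frac{\beta'}{\beta}\, x \right)^{\!n-j}
\;=\; \sum_{m\ge 0} \binom{n-j}{m} \left( -\frac{\beta'}{\beta} \right)^{\!m} x^{j+m}\,,
\]
substituting into $\widehat{F}=G$ and collecting the coefficient of $x^k y^n/n!$ (with $k=j+m$) gives
\[
\widehat{\binomvert{n}{k}} \;=\; \sum_{j=0}^k \binomvert{n}{j} \binom{n-j}{k-j} \left(-\frac{\beta'}{\beta}\right)^{\!k-j}\,,
\]
which is \eqref{def_relation2} after the standard identity $\binom{n-j}{k-j}=\binom{n-j}{n-k}$. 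Finally, a short check on \eqref{def_M2} shows that $\bm{\mu}\mapsto\bm{\widehat{\mu}}$ is an involution whose only effect on the parameter $\beta'/\beta$ appearing above is a sign change; applying \eqref{def_relation2} with the roles of $\binomvert{\cdot}{\cdot}$ and $\widehat{\binomvert{\cdot}{\cdot}}$ swapped (and $\beta'\mapsto -\beta'$) immediately yields \eqref{def_relation1}. The main obstacle is purely bookkeeping in the PDE translation and the verification that the parameter map is involutive; the inversion itself is a single application of the binomial theorem.
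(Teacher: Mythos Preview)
Your proposal is correct and follows essentially the same route the paper takes: the paper states that Corollary~\ref{coro.involution} is an immediate consequence of Proposition~\ref{prop.involution}, and your argument spells out exactly how---translate the recurrence into the PDE~\eqref{eq_PDE_final}, identify $\widehat{F}$ with the transformed $F$ by uniqueness, and extract coefficients via the binomial expansion of $(1-\beta' x/\beta)^{n-j}$. The only points worth making explicit in a final write-up are (i) that the numbers $\binomvert{n}{k}$ vanish for $k>n$ (so the expansion is polynomial in $x$ for each $n$), and (ii) that uniqueness of the formal-power-series solution of~\eqref{eq_PDE_final} with $F(x,0)=1$ is what lets you identify $G$ with $\widehat{F}$; both are routine.
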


\medskip

\noindent
{\bf Remark.} Notice that when $\beta\beta'\neq0$ the pair
\begin{subequations}
\begin{align}
a_k &\;=\; \sum\limits_{j=0}^k \widehat{a}_j\,
     \binom{n-j}{n-k} \, \left( \frac{\beta'}{\beta} \right)^{k-j} \,,\\
\widehat{a}_k &\;=\; \sum\limits_{j=0}^k  a_j\,
     \binom{n-j}{n-k} \, \left( -\frac{\beta'}{\beta} \right)^{k-j}\,,
\end{align}
\end{subequations}
is an \textit{inverse pair} in the sense of Riordan \cite{Riordan_68}
(see also \cite{He_07}), and it generates the combinatorial identity
\begin{equation}
 \sum\limits_{i=j}^k (-1)^{i+j}
     \binom{n-i}{n-k} \, \binom{n-j}{n-i} \;=\; \delta_{kj}\,.
\end{equation}

\medskip

According to the results presented in Sections~A.15 and~A.1.6 of
Ref.~\cite{BSV}, the EGF for the $\nu$-order $(s,t)$-Ward numbers
$F_W(x,y;\bm{\widehat{\mu}})$ with
$\bm{\widehat{\mu}}=(0,1,s; \nu, 1, t+s-\nu-1)$ and
the EGF for the $(\nu+1)$-order $(s,t)$-Eulerian numbers
$F_E(x,y;\bm{\mu})$ with $\bm{\mu}=(0,1,s;\nu+1,-1,t-\nu)$ are related by
(cf.~\eqref{def_fhat}):
\begin{subequations}
\label{def_eqs3-4}
\begin{align}
F_W(x,y; \bm{\widehat{\mu}}) &\;=\;
      F_E \left( \frac{x}{1+x}, y\, (1+x); \bm{\mu} \right) \,,
\label{def_eq4} \\[2mm]
F_E(x,y;\bm{\mu}) &\;=\;
      F_W\left( \frac{x}{1-x}, y\, (1-x); \bm{\widehat{\mu}}\right) \,.
\label{def_eq3}
\end{align}
\end{subequations}
If we use \eqref{def_EGF_stnuEulerian}, we obtain from \eqref{def_eq4} the EGF
for the $\nu$-order $(s,t)$-Ward numbers \cite[Section~A.1.6]{BSV}:
\begin{equation}
F_W(x,y) = \frac{T_{\nu+1}\left( e^{y\, (1+x)^{-\nu}} \,
               T^{-1}_{\nu+1}\left(\frac{x}{1+x}\right)\right)^s}
              {\left[1 - T_{\nu+1}\left(e^{y\, (1+x)^{-\nu}} \,
                         T^{-1}_{\nu+1}\left(\frac{x}{1+x}\right)
                                  \right)\right]^{s+t}}
         \, \frac{1}{x^s \ (1+x)^t}\,.
\label{def_EGF_stnuWard}
\end{equation}
Finally, using \eqref{def_relations1-2} we obtain the following

\begin{corollary} \label{coro.ward.euler}
The numbers $\nueulergen{n}{k}{(\nu)}{(s,t)}$ and
$W^{(\nu)}(n,k;s,t)$ are related by the equations
\begin{subequations}
\label{def_relations3-4}
\begin{align}
W^{(\nu)}(n,k;s,t) &\;=\; \sum\limits_{j=0}^k
       \nueulergen{n}{j}{(\nu+1)}{(s,t)} \,\binom{n-j}{n-k} \,,
\label{def_relation3} \\[2mm]
\nueulergen{n}{k}{(\nu+1)}{(s,t)} &\;=\; \sum\limits_{j=0}^k
 (-1)^{k-j} \, W^{(\nu)}(n,j;s,t) \, \binom{n-j}{n-k} \,.
\label{def_relation4}
\end{align}
\end{subequations}
\end{corollary}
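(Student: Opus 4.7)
The plan is to derive Corollary \ref{coro.ward.euler} as a direct specialization of Corollary \ref{coro.involution}, once the correct parameter identifications have been made. The observation that drives the proof is that both the Eulerian recurrence \eqref{def_recurrence_stnuEulerianOK} (taken at order $\nu+1$) and the Ward recurrence \eqref{def_recurrence_stnuWardOK} (at order $\nu$) already have the linear shape \eqref{eq_binomvert}; the two parameter vectors are precisely those displayed in the paragraph preceding \eqref{def_EGF_stnuWard}.

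First I would read off the parameters explicitly. Replacing $\nu$ by $\nu+1$ in \eqref{def_recurrence_stnuEulerianOK} yields the coefficients $(k+s)$ and $((\nu+1)n-k+t-\nu)$ on $\binomvert{n-1}{k}$ and $\binomvert{n-1}{k-1}$ respectively, matching \eqref{eq_binomvert} with $\bm{\mu}=(0,1,s;\nu+1,-1,t-\nu)$; similarly \eqref{def_recurrence_stnuWardOK} has coefficients $(k+s)$ and $(\nu n+k+s+t-1-\nu)$, matching $\bm{\widehat{\mu}}=(0,1,s;\nu,1,s+t-\nu-1)$. Next I would verify that $\bm{\widehat{\mu}}$ is the image of $\bm{\mu}$ under the involution \eqref{def_M2}: substituting $\alpha=0$, $\beta=1$, $\gamma=s$, $\alpha'=\nu+1$, $\beta'=-1$, $\gamma'=t-\nu$ gives the updated entries $(\nu+1)+(-1)-0\cdot(-1)/1=\nu$, $-(-1)=1$, and $(t-\nu)+(-1)-s\cdot(-1)/1=s+t-\nu-1$, reproducing $\bm{\widehat{\mu}}$ exactly.

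With those identifications in hand, Corollary \ref{coro.involution} applies with $\binomvert{n}{k}=\nueulergen{n}{k}{(\nu+1)}{(s,t)}$, $\widehat{\binomvert{n}{k}}=W^{(\nu)}(n,k;s,t)$, and ratio $\beta'/\beta=-1$. Equation \eqref{def_relation2} then reads $W^{(\nu)}(n,k;s,t)=\sum_{j=0}^k \nueulergen{n}{j}{(\nu+1)}{(s,t)}\binom{n-j}{n-k}(1)^{k-j}$, which is \eqref{def_relation3}; and \eqref{def_relation1} reads $\nueulergen{n}{k}{(\nu+1)}{(s,t)}=\sum_{j=0}^k W^{(\nu)}(n,j;s,t)\binom{n-j}{n-k}(-1)^{k-j}$, which is \eqref{def_relation4}. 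The only real obstacle is bookkeeping: one must keep straight which vector plays the role of $\bm{\mu}$ versus $\bm{\widehat{\mu}}$, so that the sign $(-1)^{k-j}$ lands on the Eulerian rather than the Ward side. A sanity check is that the Eulerian-to-Ward direction must have non-negative coefficients, since both sequences are non-negative, while the inverse direction may and does carry signs. An alternative route is to substitute \eqref{def_EGF_stnuEulerian} and \eqref{def_EGF_stnuWard} into the EGF identities \eqref{def_eqs3-4} and extract the coefficient of $x^ky^n/n!$ using $[x^k]x^j(1\pm x)^{n-j}=(\pm 1)^{k-j}\binom{n-j}{k-j}$, but this essentially re-proves Corollary \ref{coro.involution} in place of invoking it.
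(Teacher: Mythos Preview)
Your proposal is correct and follows essentially the same approach as the paper: identify the parameter vectors $\bm{\mu}=(0,1,s;\nu+1,-1,t-\nu)$ and $\bm{\widehat{\mu}}=(0,1,s;\nu,1,s+t-\nu-1)$ for the $(\nu+1)$-order Eulerian and $\nu$-order Ward recurrences, observe that they are related by the involution \eqref{def_M2}, and then read off \eqref{def_relations3-4} from \eqref{def_relations1-2} with $\beta'/\beta=-1$. You have in fact supplied more detail than the paper, which simply states that the corollary follows from \eqref{def_relations1-2} after the parameter identifications in \eqref{def_eqs3-4}.
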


Notice that when $(\nu,s,t)=(1,0,1)$ we recover the Ward numbers 
\cite[entry \seqnum{A134991}]{Sloane}
$W^{(1)}(n,k;0,1)=W(n,k)=\associatedstirlingsubset{n+k}{k}$, corresponding
to $\bm{\mu}=(0,1,0;1,1,-1)$. The numbers
$\associatedstirlingsubset{n}{k}$ are the associated Stirling subset numbers
\cite{Fekete_94}, \cite[entry \seqnum{A008299}]{Sloane}. 
Eq.~\eqref{def_relations3-4}
relates these numbers with the second-order $(0,1)$-Eulerian numbers
(i.e., the second-order Eulerian numbers with the traditional indexing
$\eulersecondgen{n}{k}{(0,1)}=B_{n,k}$) in the form mentioned 
in Eq.~\eqref{Ward_SecondOrderEuler}:
\begin{subequations}
\label{def_relations5-6}
\begin{align}
\associatedstirlingsubset{n+k}{k} &\;=\;
           \sum\limits_{j=0}^k B_{n,j} \, \binom{n-j}{n-k} \,,
\label{def_relation5} \\[2mm]
B_{n,k} &\;=\; \sum\limits_{j=0}^k (-1)^{k-j} \,
            \associatedstirlingsubset{n+j}{j} \, \binom{n-j}{k-j} \,.
\label{def_relation6}
\end{align}
\end{subequations}

As $B_{n,k}= \eulersecond{n}{k-1}$ for $n\ge 1$ and $1\le k \le n$, we
can substitute this expression into \eqref{def_relations5-6} and, after some
algebraic manipulations, we arrive at the formulas
\cite[Corollaries~5 and~4]{Smiley_00}:
\begin{subequations}
\label{def_coros_smiley}
\begin{align}
\eulersecond{n}{k} &\;=\; \sum\limits_{j=0}^k (-1)^{k-j} \,
\associatedstirlingsubset{n+j+1}{j+1} \, \binom{n-j-1}{k-j} \,,
\label{def_coro5_smiley} \\
\associatedstirlingsubset{n+k}{k} &\;=\;  \sum\limits_{j=0}^k
\eulersecond{n}{j} \, \binom{n-j-1}{k-j-1} \,.
\label{def_coro6_smiley}
\end{align}
\end{subequations}

%
%
\section{\texorpdfstring{Combinatorial interpretation of the generalized %
                         \\ Ward numbers}
                        {Combinatorial interpretation of the generalized %
                         Ward numbers}} 
\label{sec.nu_ward_combinatorics}

In this section, we will give a combinatorial interpretation of the
$\nu$-order generalized $(s,t)$--Ward numbers
(cf.~\eqref{def_recurrence_stnuWardOK}) based on the identity
\eqref{def_relation3}.

For fixed values of $n$, $k$, $s$, $t$, and a given generalized partition 
$\bm{t}$ of $t$ with $s$ parts,
the interpretation relies on the fact that, to obtain $W^{(\nu)}(n,k;s,t)$, 
we sum over
the number of $(\nu+2,\bm{t}+\mathbf{1})$--ary increasing
$(s,[n])$--forests $\bm{F}$ with $j$ ascents times
$\binom{n-j}{n-k}$. (Recall Remark~1 after 
Theorem~\ref{theo.ary increasing_forest}.) 
In this latter factor, $n-j$
admits a simple interpretation in terms of the set $\mathcal{E}(\bm{F})$ of
internal nodes of $\bm{F}$
that are the first (leftmost) children of their respective parents.
For a tree $T$ with $n$ internal nodes,
the cardinality of this set is denoted by $D_{n,1}=|\mathcal{E}(T)|$
by Janson {\em et al.}\/ \cite{Janson_11}.
We will see that $\binom{n-j}{n-k}$ is closely related to the number of ways
of marking $n-k$ nodes of the set $\mathcal{E}(\bm{F})$.

%
%
\begin{figure}[htb]
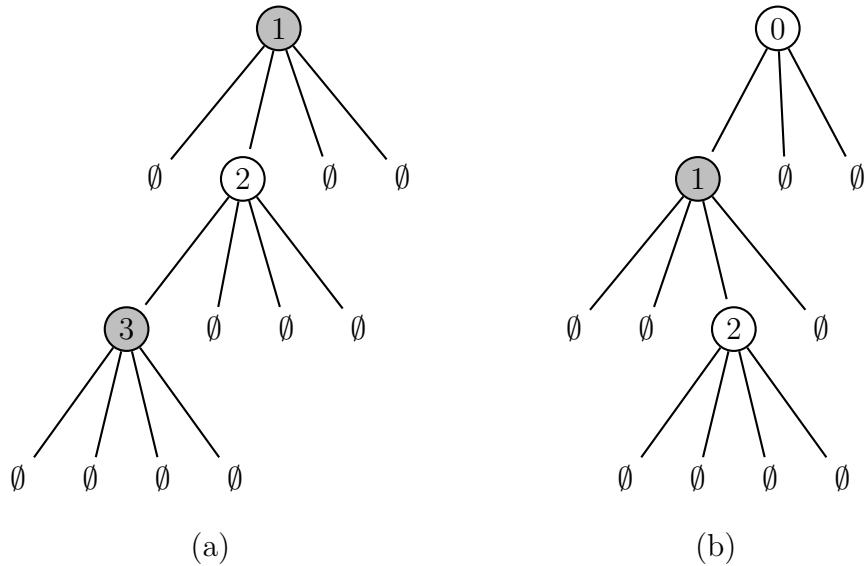

\centering
\begin{tabular}{cc}
\pstree[nodesepB=3pt]{\Tcircle[fillstyle=solid,fillcolor=lightgray]{1}}{%
  \TR{$\emptyset$}
  \pstree[nodesepB=3pt]{\Tcircle{2}}{%
    \pstree[nodesepB=3pt]{\Tcircle[fillstyle=solid,fillcolor=lightgray]{3}}{%
       \TR{$\emptyset$}
       \TR{$\emptyset$}
       \TR{$\emptyset$}
       \TR{$\emptyset$}
    }
    \TR{$\emptyset$}
    \TR{$\emptyset$}
    \TR{$\emptyset$}
  }
  \TR{$\emptyset$}
  \TR{$\emptyset$}
}
& \qquad\qquad
\pstree[nodesepB=3pt]{\Tcircle{0}}{%
  \pstree[nodesepB=3pt]{\Tcircle[fillstyle=solid,fillcolor=lightgray]{1}}{%
    \TR{$\emptyset$}
    \TR{$\emptyset$}
    \pstree[nodesepB=3pt]{\Tcircle{2}}{%
       \TR{$\emptyset$}
       \TR{$\emptyset$}
       \TR{$\emptyset$}
       \TR{$\emptyset$}
    }
    \TR{$\emptyset$}
  }
  \TR{$\emptyset$}
  \TR{$\emptyset$}
}
\\
\mbox{} & \\
(a) &\qquad\qquad (b) \\
\end{tabular}
\caption{(a) A 4--ary increasing $[3]$--tree $T_{\textrm{a}}$, 
  which is equivalent to the
  $(3,[3])$-Stirling permutation $\underline{\bm{1}}\bm{333}222\bm{11}$ with
  one ascent at index 1 (which is underlined) and two distinguished nodes
  (in boldface): the root and the one labelled 3. These nodes are depicted in
  gray. Note that $\mathcal{D}_0(T_{\textrm{a}})=\{$\ding{172},\ding{174}$\}$
  and $|\mathcal{D}_0(T_{\textrm{a}})|=D_{3,1}+1=2$ for this tree.
  (b) A $(4,3)$--ary increasing $[2]$--tree $T_{\textrm{b}}$ equivalent to the
  $(3,2,[2])$--Stirling permutation $\bm{1}\underline{\bm{1}}222\bm{1}00$
  with one ascent at index 2, and one distinguished node (labelled 1).
  In this case,   $\mathcal{D}_1(T_{\textrm{b}})=\{$\ding{172}$\}$ and
  $|\mathcal{D}_1(T_{\textrm{b}})| = D_{2,1}=1$.
  In both examples, all possible distinguishable nodes are actually chosen.
}\label{Fig3}
\end{figure}

Let us start with the simplest case $s=1$
by considering the class of  $(\nu+2,t+1)$--ary increasing
$[n]$--trees. Then, for any tree $T$ of this class with $j$ ascents,
it is easy to prove that (see \cite[Theorem~2]{Janson_11}):
\begin{equation}
n - j \;=\; |\mathcal{E}(T)| + \delta_{t,0} \,.
\label{def_dn1}
\end{equation}
When $t>0$, we can choose the $n-k$ distinguished nodes from the set
$\mathcal{D}_t(T)=\mathcal{E}(T)$;
when $t=0$, we make the choice from the set $\mathcal{D}_0(T)$ which is now
the union of the root node and the set $\mathcal{E}(T)$.
(Notice that our definition
of ascent slightly differs from that of Ref.~\cite{Janson_11}.) See
Figure~\ref{Fig3} for two examples with $t=0$ (a) and $t>0$ (b). In this
figure, distinguished nodes are depicted in gray.
Putting all together, we can conclude that:

\begin{theorem} \label{theo.ward1}
Let us fix integers $n,t\ge 0$, $\nu\ge 1$, and $0\le k\le n$. Then,
$W^{(\nu)}(n,k;1,t)$ counts the number of $(\nu+2,t+1)$--ary increasing
$[n]$--trees $T$ with at most $k$ ascents and $n-k$ distinguished nodes from
the set $\mathcal{D}_t(T)$.
\end{theorem}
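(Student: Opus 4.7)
The plan is to read the identity \eqref{def_relation3} specialized to $s=1$,
\begin{equation*}
W^{(\nu)}(n,k;1,t) \;=\; \sum_{j=0}^{k} \nueulergen{n}{j}{(\nu+1)}{(1,t)} \binom{n-j}{n-k},
\end{equation*}
as a direct counting formula for the claimed marked trees. Proposition~\ref{prop.Eulerian_nu_t}, applied with $\nu$ replaced by $\nu+1$, identifies $\nueulergen{n}{j}{(\nu+1)}{(1,t)}$ as the number of $(\nu+1,t,[n])$--Stirling permutations with $j$ ascents, and Theorem~\ref{theo.nu+1.t+1_ary increasing} with the same substitution transfers this count to the family of $(\nu+2,t+1)$--ary increasing $[n]$--trees $T$ whose associated Stirling permutation has exactly $j$ ascents.

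The core step is the cardinality identity \eqref{def_dn1}, $|\mathcal{D}_t(T)| = n - j$, which I would establish by a local analysis of the bijection of Theorem~\ref{theo.nu+1.t+1_ary increasing}. For each internal node $p$ whose enclosing sub-permutation splits as $\rho' = \rho'_1 \cdot p \cdot \rho'_2 \cdots p \cdot \rho'_{\nu+2}$, the Stirling property forces every letter of each $\rho'_k$ to be strictly greater than $p$, so the only ascents in $\rho'$ crossing a $p$-letter are of the form $p \to (\text{first letter of } \rho'_{k+1})$ with $\rho'_{k+1}$ non-empty, one such ascent per non-empty $\rho'_{k+1}$ with $k+1 \ge 2$ (i.e., per non-first child of $p$ in $T$). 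Since every ascent of $\rho$ crosses some $p$-letter at an appropriate level of the recursion, summing over all internal nodes $p$ yields that the total number of ascents equals the total number of non-first-child non-root internal nodes of $T$. When $t > 0$, the root is labelled $0$ and the remaining $n$ non-root internal nodes split into $|\mathcal{E}(T)|$ first children and $j$ non-first children, giving $|\mathcal{E}(T)| = n - j$; when $t = 0$, the root is labelled $1 \in [n]$ and only $n-1$ non-root internal nodes are available, giving $|\mathcal{E}(T)| = n - 1 - j$. In both cases $|\mathcal{D}_t(T)| = |\mathcal{E}(T)| + \delta_{t,0} = n - j$, which is \eqref{def_dn1}. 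Alternatively, one may invoke \cite[Theorem~2]{Janson_11} directly and absorb the slight discrepancy in ascent conventions into the Kronecker delta.

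With \eqref{def_dn1} in hand, $\binom{n-j}{n-k} = \binom{|\mathcal{D}_t(T)|}{n-k}$ is precisely the number of ways to mark $n-k$ distinguished nodes within $\mathcal{D}_t(T)$; this factor vanishes whenever $j > k$, so the summation range $j \le k$ automatically enforces the condition ``at most $k$ ascents''. The right-hand side therefore equals the number of ordered pairs $(T,S)$ where $T$ is a $(\nu+2,t+1)$--ary increasing $[n]$--tree with at most $k$ ascents and $S \subseteq \mathcal{D}_t(T)$ has cardinality $n - k$, which is exactly the claimed interpretation of $W^{(\nu)}(n,k;1,t)$. The main obstacle is the careful verification of \eqref{def_dn1} together with the uniform handling of the cases $t=0$ and $t>0$, which collapse into a single formula via the Kronecker correction built into the definition of $\mathcal{D}_t(T)$.
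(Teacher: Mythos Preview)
Your proposal is correct and follows essentially the same route as the paper: specialize \eqref{def_relation3} to $s=1$, interpret $\nueulergen{n}{j}{(\nu+1)}{(1,t)}$ via Proposition~\ref{prop.Eulerian_nu_t} and Theorem~\ref{theo.nu+1.t+1_ary increasing} as the number of $(\nu+2,t+1)$--ary increasing $[n]$--trees with $j$ ascents, establish $|\mathcal{D}_t(T)|=n-j$, and read $\binom{n-j}{n-k}$ as the number of markings. The only substantive difference is that the paper states \eqref{def_dn1} with a reference to \cite[Theorem~2]{Janson_11}, whereas you supply a self-contained argument by partitioning the ascents according to their \emph{left} letter $p$ and matching them with the internal non-first children of $p$; this local analysis is correct and arguably clearer than a bare citation, since it makes the role of the $\delta_{t,0}$ correction (root counted among the $n$ labels or not) completely explicit.
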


%
%
\begin{figure}[htb]
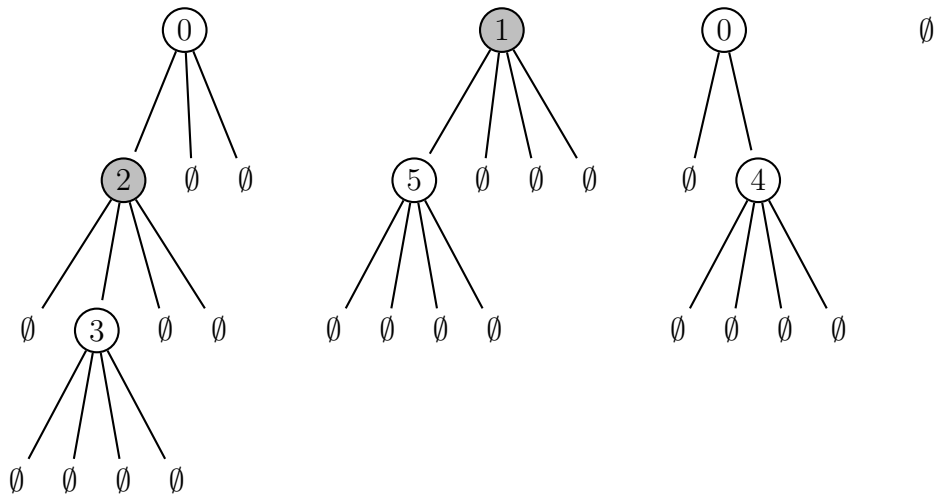

\centering
  \pstree[nodesepB=3pt,treesep=0.5cm]{\Tcircle{0}}{%
    \pstree[nodesepB=3pt,treesep=0.5cm]{%
           \Tcircle[fillstyle=solid,fillcolor=lightgray]{2}}{%
      \TR{$\emptyset$}
      \pstree[nodesepB=3pt,treesep=0.5cm]{\Tcircle{3}}{%
         \TR{$\emptyset$}
         \TR{$\emptyset$}
         \TR{$\emptyset$}
         \TR{$\emptyset$}
      }
      \TR{$\emptyset$}
      \TR{$\emptyset$}
    }
    \TR{$\emptyset$}
    \TR{$\emptyset$}
  }
\qquad
  \pstree[nodesepB=3pt,treesep=0.5cm]{%
         \Tcircle[fillstyle=solid,fillcolor=lightgray]{1}}{%
    \pstree[nodesepB=3pt,treesep=0.5cm]{\Tcircle{5}}{%
      \TR{$\emptyset$}
      \TR{$\emptyset$}
      \TR{$\emptyset$}
      \TR{$\emptyset$}
    }
    \TR{$\emptyset$}
    \TR{$\emptyset$}
    \TR{$\emptyset$}
  }
\qquad
  \pstree[nodesepB=3pt,treesep=0.5cm]{\Tcircle{0}}{%
    \TR{$\emptyset$}
    \pstree[nodesepB=3pt,treesep=0.5cm]{\Tcircle{4}}{%
      \TR{$\emptyset$}
      \TR{$\emptyset$}
      \TR{$\emptyset$}
      \TR{$\emptyset$}
    }
  }
\qquad
\pstree[nodesepB=3pt,treesep=0.5cm]{\Tr{$\emptyset$}}{\Tn}
\vspace*{2mm}
\caption{
A $(4,\bm{t}+\mathbf{1})$--ary increasing $(4,[5])$--forest
$\bm{F}=(T_1,T_2,T_3,T_4)$ with $\bm{t}=(2,0,1,0)$
and the generalized partition $(\{2,3\},\{1,5\},\{4\},\emptyset)$ of $[5]$.
It corresponds to the $(3,\bm{t},5)$--Stirling permutation
$(\underline{\bm{2}}333\bm{22}00,
  555\bm{111},\underline{0}444,\emptyset)$ with $2$ ascents
and two distinguished nodes (labelled $1$ and $2$) out of the three possible
ones. From left to right, the first tree $T_1$ has one ascent at index 1 and
one distinguished node out of $|\mathcal{D}_2(T_1)|=1$;
the second tree has no ascents and one distinguished node out of
$|\mathcal{D}_0(T_2)|=2$; the third tree has one ascent at index 1 and no
distinguished nodes ($\mathcal{D}_1(T_3)=\varnothing$); and the last one, 
$T_4$, is the trivial empty tree.
}\label{Fig4}
\end{figure}

Let us now consider the extension of Theorem \ref{theo.ward1} for $s\ge 2$.
In this case, our basic objects are indeed the
$(\nu+2,\bm{t}+\mathbf{1})$--ary increasing $(s,[n])$-forests $\bm{F}$ with
$j$ ascents.
Each connected component $T_i$ of the forest $\bm{F}=(T_1,\ldots,T_s)$
is a $(\nu+2,t_i+1)$--ary increasing $X_i$--tree,
where  $(X_1,X_2,\ldots, X_s)$ is a
generalized partition of $[n]$. Let the $j_i$ be the number
of ascents of $T_i$, then  $|X_i| - j_i = |\mathcal{D}_{t_i}(T_i)|$.
If we define the set
\begin{equation}
\mathcal{D}_{\bm{t}}(\bm{F}) \;=\; \mathop{\bigcup}\limits_{i=1}^s
\mathcal{D}_{t_i}(T_i)\,,
\label{def_Fts}
\end{equation}
we get that, irrespectively of the partition $(X_1,X_2,\ldots, X_s)$, for
any $(\nu+2,\bm{t}+\mathbf{1})$--ary
increasing $(s,[n])$-forest $\bm{F}$ with $j$ ascents
\begin{equation}
|\mathcal{D}_{\bm{t}}(\bm{F})| \;=\; \sum_{i=1}^s(|X_i|-j_i)=n-j  \,.
\end{equation}
This fact allows us to generalize Theorem \ref{theo.ward1} when $s\geq 2$:

\begin{theorem} \label{theo.ward2}
Let us fix integers $n,t\ge 0$, $\nu,s\ge 1$, and  $0\le k\le n$. Given
any generalized ordered partition $\bm{t}=(t_1,\ldots,t_s)$ of $t$,
$W^{(\nu)}(n,k;s,t)$ counts the number of $(\nu+2,\bm{t}+\mathbf{1})$--ary
increasing $(s,[n])$--forests $\bm{F}$ with at most $k$ ascents and
$n-k$ distinguished nodes
from the set $\mathcal{D}_{\bm{t}}(\bm{F})$ defined in \eqref{def_Fts}.
\end{theorem}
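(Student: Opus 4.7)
The plan is to deduce Theorem \ref{theo.ward2} directly from the inverse-pair identity \eqref{def_relation3} of Corollary \ref{coro.ward.euler},
\[
W^{(\nu)}(n,k;s,t) \;=\; \sum_{j=0}^{k} \nueulergen{n}{j}{(\nu+1)}{(s,t)} \, \binom{n-j}{n-k}\,,
\]
by attaching an independent combinatorial meaning to each factor on the right. First, by Theorem~\ref{theo.Eulerian_nu_st} combined with Theorem~\ref{theo.ary increasing_forest} (the latter applied with $\nu$ replaced by $\nu+1$), the number $\nueulergen{n}{j}{(\nu+1)}{(s,t)}$ enumerates the $(\nu+2,\bm{t}+\mathbf{1})$--ary increasing $(s,[n])$--forests with exactly $j$ ascents, for any fixed generalized ordered partition $\bm{t}=(t_1,\ldots,t_s)$ of $t$. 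Summing over $0\le j\le k$ therefore sweeps out all such forests with \emph{at most} $k$ ascents.

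Next, I would verify that for every such forest $\bm{F}=(T_1,\ldots,T_s)$ with $j$ ascents, the distinguished-node set satisfies $|\mathcal{D}_{\bm{t}}(\bm{F})|=n-j$, so that the binomial $\binom{n-j}{n-k}$ counts exactly the number of ways of marking $n-k$ distinguished nodes in $\mathcal{D}_{\bm{t}}(\bm{F})$. The per-component identity $|\mathcal{D}_{t_i}(T_i)|=|X_i|-j_i$, where $j_i$ is the number of ascents of $T_i$ and $X_i$ its label set, is exactly the one underlying Theorem~\ref{theo.ward1} through \eqref{def_dn1} (essentially Theorem~2 of \cite{Janson_11}); the $\delta_{t_i,0}$ correction handles the root of $T_i$ when $t_i=0$. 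Because both $|X_i|$ and the ascent count are strictly additive across the disjoint union \eqref{def_Fts}, summing the per-component identity over $i=1,\ldots,s$ and using $\sum_i|X_i|=n$, $\sum_i j_i=j$ yields the required global statement.

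Combining the two steps, the number of $(\nu+2,\bm{t}+\mathbf{1})$--ary increasing $(s,[n])$--forests with at most $k$ ascents, each equipped with a choice of $n-k$ distinguished nodes from $\mathcal{D}_{\bm{t}}(\bm{F})$, equals
\[
\sum_{j=0}^{k} \nueulergen{n}{j}{(\nu+1)}{(s,t)} \, \binom{n-j}{n-k} \;=\; W^{(\nu)}(n,k;s,t)\,,
\]
as claimed. The only real subtlety is the bookkeeping of the $\delta_{t_i,0}$ bumps when several $t_i$ vanish simultaneously (possibly together with $X_i=\varnothing$): one must check that including the corresponding roots in the union \eqref{def_Fts} matches the total count $n-j$ regardless of how the partition $\bm{t}$ distributes its zero parts. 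Once the single-tree statement of Theorem~\ref{theo.ward1} is accepted, however, additivity reduces the multi-component case to a routine disjoint-union argument, and the proof is complete.
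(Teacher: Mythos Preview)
Your proposal is correct and follows essentially the same route as the paper: both arguments start from the identity \eqref{def_relation3}, invoke Theorems~\ref{theo.Eulerian_nu_st} and~\ref{theo.ary increasing_forest} (with $\nu\mapsto\nu+1$) to interpret $\nueulergen{n}{j}{(\nu+1)}{(s,t)}$ as the number of $(\nu+2,\bm{t}+\mathbf{1})$--ary increasing $(s,[n])$--forests with exactly $j$ ascents, and then establish $|\mathcal{D}_{\bm{t}}(\bm{F})|=n-j$ by summing the per-component identity $|\mathcal{D}_{t_i}(T_i)|=|X_i|-j_i$ over the disjoint union \eqref{def_Fts}. The paper's own argument is equally brief on the $\delta_{t_i,0}$ bookkeeping you flag, so there is no divergence in rigor or strategy.
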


\medskip

\noindent
This theorem completes the combinatorial interpretation of the
$\nu$--order generalized $(s,t)$--Ward numbers for $\nu,s\ge 1$ and
$t\ge 0$. Figure~\ref{Fig4} shows an example of a
$(4,\bm{t}+\mathbf{1})$--ary increasing
$(4,[5])$--forest $\bm{F}=(T_1,T_2,T_3,T_4)$ with $\bm{t}=(2,0,1,0)$, 
and the generalized partition $(\{2,3\},\{1,5\},\{4\},\emptyset)$ of $[5]$.
This forest has two ascents and two distinguished nodes out of the
three possible ones
$\mathcal{D}_{\bm{t}}(\bm{F})=\{$\ding{172},\ding{173},\ding{176}$\}$.

%
%
\section*{Acknowledgments}

We are indebted to Alan Sokal for his participation in the early stages
of this work, his encouragement, and useful suggestions later on.
We also thank Jesper Jacobsen, Anna de Mier, Neil Sloane, and Mike Spivey
for correspondence, and David Callan for pointing out some interesting
references to us. Last but not least, we thank Bojan Mohar for
valuable criticisms and suggestions.

%
%


\begin{thebibliography}{99}

\bibitem{BSV} J.F. Barbero G., J. Salas, and E.J.S. Villase\~nor,
   Bivariate generating functions for a class of linear recurrences:
   General structure, J. Combin. Theory A {\bf 125} (2014) 146--165,
   \arxiv{1307.2010} [math.CO].

\bibitem{Bona_12} M. B\'ona, {\em Combinatorics of Permutations},
   Second Edition (Chapman \& Hall/CRC, Boca Raton, Florida, 2012).

\bibitem{Brenti_98}  F. Brenti, Hilbert polynomials in combinatorics,
    J. Algebraic Combin. {\bf 7} (1998) 127--156. 

\bibitem{Carlitz_60}  L. Carlitz, Eulerian numbers and
   polynomials of higher order, Duke Math. J. {\bf 27} (1960) 401--424.

\bibitem{Carlitz_73}   L. Carlitz, Enumeration of permutations by rises
   and cycle structure,
   J. Reine Angew. Math. {\bf 262/263} (1973) 220--233.

\bibitem{Carlitz_74}   L. Carlitz and R. Scoville, Generalized Eulerian
   numbers: combinatorial applications,
   J. Reine Angew. Math. {\bf 265} (1974) 110--137.

\bibitem{Chen_93} W.Y.C. Chen, Context-free grammars, differential operators 
   and formal power series, Theoret. Comput. Sci. {\bf 117} (1993) 113-129.

\bibitem{Chen_12} W.Y.C. Chen, R.X.J. Hao, and H.R.L. Yang, Context-free 
   grammars and multivariate stable polynomials over Stirling permutations,  
   preprint, \arxiv{1208.1420} [math.CO].

\bibitem{Comtet_74}  L. Comtet, {\em Advanced Combinatorics:
   The Art of Finite and Infinite Expansions}\/
   (Reidel, Dordrecht--Boston, 1974).

\bibitem{Corless_96} R.M. Corless, G.H. Gonnet, D.E.G. Hare, D.J. Jeffrey,
   and D.E. Knuth, On the Lambert $W$ function, Adv. Comput. Math. {\bf 5}
   (1996) 329--359.

\bibitem{Corless_97} R.M. Corless, D.J. Jeffrey, and D.E. Knuth,
   A sequence of series for the Lambert $W$ function,
   in {\em Proceedings of the 1997 International Symposium on Symbolic
      and Algebraic Computation}\/
   (Association for Computing Machinery, New York, 1997),
   pp.~197--204 (electronic, available on-line at \url{http://dl.acm.org}).

\bibitem{Dillon_68}  J.F. Dillon and D.P. Roselle, Eulerian numbers of
   higher order, Duke Math. J. {\bf 35} (1968) 247--256.

\bibitem{Fekete_94} A.E. Fekete, Apropos ``Two notes on notation",
   Amer. Math. Monthly {\bf 101} (1994) 771--778.

\bibitem{Foata_70}  D. Foata and M.-P. Sch\"utzenberger,
   {\em Th\'eorie G\'eom\'etrique des Polyn\^omes Eul\'eriens}\/,
   Lecture Notes in Mathematics \#138
   (Springer-Verlag, Berlin--Heidelberg--New York, 1970).

\bibitem{Gessel_78a} I. Gessel, A note on Stirling permutations,
   manuscript, August, 1978.

\bibitem{Gessel_78}  I. Gessel and R.P. Stanley, Stirling polynomials,
   J. Combin. Theory A {\bf 24} (1978) 24--33.

\bibitem{Graham_94}  R.L. Graham, D.E. Knuth and O. Patashnik,
   {\em Concrete Mathematics: A Foundation for Computer Science}\/,
   2nd ed.~(Addison-Wesley, Reading, MA, 1994).

\bibitem{He_07} T.-X. He, L.C. Hsu, and P.J.-S. Shiue,
   The Sheffer group and the Riordan group, Discrete Appl. Math.
   {\bf 155} (2007) 1895--1909.

\bibitem{Janson_11} S. Janson, M. Kuba, and A. Panholzer,
   Generalized Stirling permutations, families of increasing trees and
   urn models, J. Combin. Theory A {\bf 118} (2011) 94--114,
   \arxiv{0805.4084} [math.CO].

\bibitem{Kuba_11} M. Kuba and A. Panholzer,
   Analysis of statistics for generalized Stirling permutations,  
   Combin. Probab. Comput. {\bf 20} (2011) 875--910.  

\bibitem{Magagnosc_80}  D. Magagnosc, Recurrences and formulae in an
   extension of the Eulerian numbers,
   Discrete Math. {\bf 30} (1980) 265--268.

\bibitem{Sloane} The OEIS Foundation Inc. (2011), The On-Line Encyclopedia of
    Integer Sequences, \url{http://oeis.org/}.

\bibitem{Park_94a} S. Park, The $r$--multipermutations, J. Combin.
    Theory A {\bf 67} (1994) 44--71. 

\bibitem{Park_94b} S. Park, Inverse descents of $r$-multipermutations, Discrete
    Math. {\bf 132} (1994) 215--229.

\bibitem{Park_94c} S. Park, $P$-partitions and $q$-Stirling numbers, 
    J. Combin. Theory A {\bf 68} (1994) 33--52.

\bibitem{Riordan_58}  J. Riordan, {\em An Introduction to Combinatorial
    Analysis}\/ (Wiley, New York, 1958).

\bibitem{Riordan_68}  J. Riordan, {\em  Combinatorial
    Identities}\/ (Wiley, New York, 1968).

\bibitem{Smiley_00} L.M. Smiley, Completion of a rational function sequence
    of Carlitz, preprint, \arxiv{math/0006106} [math.CO].

\bibitem{Stanley_86} R.P. Stanley, {\em Enumerative Combinatorics. Vol. 1}\/
    (Wadsworth \& Brooks/Cole, Monterey, CA, 1986).

\bibitem{Ward_34} M. Ward, Representation of Stirling's numbers and Stirling's
    polynomials as sums of factorials, Amer. J. Math. {\bf 34} (1934) 87--95.

\end{thebibliography}
\end{document}